\tikzstyle directed=[postaction={decorate,decoration={markings, mark=at position .65 with {\arrow{stealth}}}}]
\numberwithin{figure}{section}
\newtheorem{lemma}{Lemma}[section]
\newtheorem{corollary}[lemma]{Corollary}
\newtheorem{theorem}[lemma]{Theorem}
\newtheorem{proposition}[lemma]{Proposition}
\newtheorem{definition}[lemma]{Definition}
\newtheorem{example}[lemma]{Example}
\def\T+{{\mathbb T_d^+}}
\def\A{\mathcal{A}}
\def\dist{\mathop{\rm dist}}
\def\N{{\mathcal N}}
\def\SRW {\mathop {\rm SRW}\nolimits}
\def\RW {\mathop {\rm RW}\nolimits}
\begin{document}

\subjclass[2020]{17A99, 05C25, 17D92}
\keywords{Genetic Algebra, Evolution Algebra, Hilbert Space, Infinite Graph}

\title{On Hilbert evolution algebras of a graph}

\author[Sebastian J. Vidal]{Sebastian J. Vidal}
\address{Sebastian J. Vidal: Departamento de Matem\'atica, Facultad de Ingenier\'ia, Universidad Nacional de la Patagonia ``San Juan Bosco'', Km 4, CP 9000, Comodoro Rivadavia, Chubut, Argentina.}
\email{sebastianvidal79@gmail.com}

\author[Paula Cadavid]{Paula Cadavid}
\address{Paula Cadavid: Universidade Federal do ABC, Avenida dos Estados, 5001-Bangu-Santo Andr\'e - SP, Brazil.}
\email{pacadavid@gmail.com}

\author[Pablo M. Rodriguez ]{Pablo M. Rodriguez}
\address{Pablo M. Rodriguez: Centro de Ci\^encias Exatas e da Natureza, Universidade Federal de Pernambuco, Av. Prof. Moraes Rego, 1235 - Cidade Universit\'aria - Recife - PE, Brazil.}
\email{pablo@de.ufpe.br}

\begin{abstract}
Evolution algebras are a special class of non-associative algebras exhibiting connections with different fields of Mathematics. Hilbert evolution algebras generalize the concept through a framework of Hilbert spaces. This allows to deal with a wide class of infinite-dimensional spaces. In this work we study Hilbert evolution algebras associated to a graph. Inspired in definitions of evolution algebras we define the Hilbert evolution algebra associated to a given graph and the Hilbert evolution algebra associated to the symmetric random walk on a graph. For a given graph, we provide conditions under which these structures are or are not isomorphic. Our definitions and results extend to graphs with infinitely many vertices a similar theory developed for evolution algebras associated to finite graphs.
\end{abstract}

\maketitle


\section{Introduction}

\subsection{Evolution algebras}

Evolution algebras are a special class of non-associative algebras  exhibiting connections with different fields of Mathematics. The first reference of a Theory of Evolution Algebras is due to Tian and Vojtechovsky, see \cite{tv}, who define an evolution algebra as an algebra $\A:=(\A,\cdot\,)$ over a field $\mathbb{K}$ with a basis $S:=\{e_i\}_{i \in \Lambda}$ such that

\begin{equation}\label{eq:ea}
e_i \cdot e_i =\displaystyle \sum_{k \in \Lambda} c_{ik} e_k, \text{ for any }i \text{ and } e_i \cdot e_j =0,\text{ if } i\neq j.
\end{equation} 

\smallskip\noindent
The concept was motivated by evolution laws of genetics: if one think in alleles as generators of algebras, then reproduction in genetics is represented by multiplication in algebra. Indeed, the first properties stated by \cite{tv} are of interest from this biological point of view. The theory was developed further on in the seminal work \cite{tian}, where the author show many interesting correspondences between these algebras and subjects like discrete-time Markov chains, graph theory, group theory, and others. We refer the reader to \cite{Cabrera/Siles/Velasco,Cabrera/Siles/Velasco/2,PMP,PMP2,PMPT,PMPY,camacho/gomez/omirov/turdibaev/2013,COT,cardoso,casas/ladra/omirov/rozitov/2013,casas/ladra/rozitov/2011,Elduque/Labra/2015,Elduque/Labra/2019,falcon,paniello,reis,tian2} and references therein for an overview of recent theoretical results, applications to other fields, and interesting open problems. We point out that, such defined, these algebras are a special class of the genetic algebras introduced by \cite{Schafer}.

The connections with other fields, suggested in part of the literature mentioned above, works mainly for finite-dimensional spaces. For applications involving infinite-dimensional spaces still there exist some gaps to be addressed, as pointed out recently by \cite{SPP}. The reason is that the previous definition considers a countable basis and it implicitly assumes that it is a Hamel basis. Thus, the sum in \eqref{eq:ea} can have only a finite number of nonzero terms. Such a restriction has implications, for example, in the connection with Markov chains. Indeed, in \cite[Example 1.2]{SPP} the authors show that not all Markov chain with a countably infinite state space has associated an evolution algebra, in contrast with \cite[Theorem  16,  page  54]{tian} for some Markov chains. In order to avoid that restriction in the series, \cite{SPP} introduce a generalization of evolution algebra by evoking an approach involving Hilbert spaces. Such approach leads us to consider other kind of basis; namely, Schauder basis. This gave rise to the concept of Hilbert evolution algebra which is an extension obtained by providing an evolution algebra structure in a given Hilbert space. 

\subsection{Hilbert evolution algebras}

Let  $V$ be  a real or complex  vector space with  inner product $ \langle \cdot, \cdot \rangle$ which is finite or infinite-dimensional. A subset $\{e_j\}_{j\in\Lambda}\subset V$, where $\Lambda$  is a countable set,  is a Schauder Basis of $V$ if any $v\in V$ has an unique representation
	\begin{equation} \label{rep} \nonumber
	v=\sum_{j\in\Lambda} v_j e_j, \,\,\text{  where } v_{j}  \in  \mathbb{K}.
	\end{equation}
We say that $V$ is a Hilbert space if it is also a complete metric space with respect to the distance function induced by the inner product.  A subset $\{e_j\}_{j\in\Lambda}\subset V$ is an orthonormal basis if every $v\in V$ can be expressed as
\begin{equation} \nonumber
    v=\sum_{j\in\Lambda}\langle v,e_j\rangle e_j.
\end{equation}
On the other hand, we  say that $V$ is separable if it has a countable dense subset. In this case, any orthonormal basis is countable. The Gram-Schmidt orthonormalization process proves that every separable Hilbert space has an orthonormal basis. We highlight  that if $V$ is finite-dimensional, the notion of Schauder basis coincides with that of Hamel basis.

\begin{definition}\label{def:evolalg} Let $\A=(\A,\langle\cdot, \cdot\rangle)$ be a real or complex separable Hilbert space provided with an algebra structure by the product $\cdot:\A\times\A\rightarrow \A$. We say that $\A:=(\A,\langle\cdot, \cdot\rangle,\cdot \,)$ is a separable Hilbert evolution algebra if it satisfies:
\begin{enumerate}[label=(\roman*)]
\item There exists an orthonormal basis $\{e_i\}_{i\in\mathbb{N}}$ and scalars $\{c_{ki}\}_{i,k\in\mathbb{N}}$, such that 
\begin{equation}\label{eq:ea03}
e_i \cdot e_i =\displaystyle \sum_{k=1}^{\infty} c_{ki} e_k 
\end{equation}
and
\begin{equation}\label{eq:ea04}
e_i \cdot e_j=0, \text{ if }i\neq j,
\end{equation} 
for any $i,j\in \mathbb{N}$. 
\smallskip
\item For any $v\in\A$, the left multiplications $L_v: \A  \longrightarrow  \A$ defined by $L_{v}(w):= v\cdot w$ for any $w\in \A$ are continuous in the metric topology induced by the inner product; i.e., there exists constants $M_v>0$ such that
\begin{equation}\label{eq:continuity condition}
\|L_v(w)\|\leq M_v \|w\|,\text{ for all } w\in \A.
\end{equation}
\end{enumerate}
\end{definition}

Conditions (i) and (ii) are compatibility conditions between the structures. A basis satisfying condition (i) will be called {\it orthonormal natural basis}.  In the sequel we will work only with separable Hilbert spaces but, for the sake of simplicity, we omit the word separable and talk about Hilbert evolution algebras. It is useful to write the product explicitly in terms of the base $\{e_i\}_{i\in\mathbb{N}}$. Let $v, w\in\A$ and write $v=\sum_{k=1}^{\infty}v_k e_k$, $w=\sum_{k=1}^{\infty}w_k e_k$, then extending \eqref{eq:ea03} by linearity, we have
\begin{equation*}
v\cdot w=\sum_{k=1}^{\infty}\biggl(\sum_{i=1}^{\infty} v_iw_ic_{ki}\biggr) e_k.
\end{equation*}




\begin{proposition} \cite[Corollary 2.4 ]{SPP}\label{corol:manageable condition}
Let $\A$ be a separable Hilbert space. Consider an orthonormal basis $\{e_i\}_{i\in\mathbb{N}}$ and suppose that the scalars $\{c_{ki}\}_{i,k\in\mathbb{N}}$ satisfy
\begin{equation}\label{eq:condition HEA02}
K:=\sup\biggl\{\sum_{k=1}^{\infty}|c_{ki}|^2 \,:\, i\in\mathbb{N} \biggr\}<+\infty.
\end{equation}
Then $\A$ admits an Hilbert evolution algebra structure satisfying equations \eqref{eq:ea03} and \eqref{eq:ea04}.
\end{proposition}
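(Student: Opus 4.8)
The plan is to define the product explicitly on the orthonormal basis via the given scalars, and then verify the two conditions of Definition~\ref{def:evolalg}; the only real content is checking the continuity condition~\eqref{eq:continuity condition}, and for that I would invoke the boundedness hypothesis~\eqref{eq:condition HEA02} together with Cauchy--Schwarz. First I would set $e_i\cdot e_i:=\sum_{k=1}^\infty c_{ki}e_k$ and $e_i\cdot e_j:=0$ for $i\neq j$. Since $\sum_k|c_{ki}|^2\le K<\infty$ for each $i$, each such series converges in $\A$ (the partial sums form a Cauchy sequence because the tails of a convergent series of squares go to zero), so the right-hand side of~\eqref{eq:ea03} is a well-defined element of $\A$. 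Extending bilinearly and continuously, for $v=\sum_i v_ie_i$ and $w=\sum_i w_ie_i$ the candidate product is $v\cdot w=\sum_{k=1}^\infty\bigl(\sum_{i=1}^\infty v_iw_ic_{ki}\bigr)e_k$, exactly the formula displayed in the excerpt; I would note that $\sum_i|v_iw_i|<\infty$ by Cauchy--Schwarz in $\ell^2$, so each inner coefficient makes sense.

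The substance is showing $L_v$ is bounded for each fixed $v\in\A$. Fix $v=\sum_i v_ie_i$ with $\sum_i|v_i|^2=\|v\|^2<\infty$, and let $w=\sum_i w_ie_i$. Then
\begin{equation*}
\|L_v(w)\|^2=\sum_{k=1}^\infty\Bigl|\sum_{i=1}^\infty v_iw_ic_{ki}\Bigr|^2.
\end{equation*}
For the inner sum, apply Cauchy--Schwarz splitting the weight as $v_iw_ic_{ki}=(v_i)\cdot(w_ic_{ki})$:
\begin{equation*}
\Bigl|\sum_{i=1}^\infty v_iw_ic_{ki}\Bigr|^2\le\Bigl(\sum_{i=1}^\infty|v_i|^2\Bigr)\Bigl(\sum_{i=1}^\infty|w_i|^2|c_{ki}|^2\Bigr)=\|v\|^2\sum_{i=1}^\infty|w_i|^2|c_{ki}|^2.
\end{equation*}
Summing over $k$ and exchanging the order of summation (all terms nonnegative, so Tonelli applies) gives
\begin{equation*}
\|L_v(w)\|^2\le\|v\|^2\sum_{i=1}^\infty|w_i|^2\sum_{k=1}^\infty|c_{ki}|^2\le\|v\|^2\,K\sum_{i=1}^\infty|w_i|^2=K\,\|v\|^2\,\|w\|^2.
\end{equation*}
Hence $\|L_v(w)\|\le M_v\|w\|$ with $M_v:=\sqrt{K}\,\|v\|$, which is~\eqref{eq:continuity condition}. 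This also retroactively confirms that $v\cdot w\in\A$, so the product is well-defined on all of $\A\times\A$, and bilinearity is immediate from the formula; thus $\A$ carries the asserted Hilbert evolution algebra structure.

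I expect the main (and essentially only) obstacle to be bookkeeping: justifying the interchange of the two infinite sums and the passage from the basis-level definition to a genuine bilinear map on the whole space. Both are handled by nonnegativity (Tonelli) and by the $\ell^2$ estimates above, so no deep input is needed beyond~\eqref{eq:condition HEA02} and Cauchy--Schwarz. One should also remark that conditions~\eqref{eq:ea03}--\eqref{eq:ea04} hold by construction, so condition~(i) of Definition~\ref{def:evolalg} is automatic and $\{e_i\}_{i\in\mathbb{N}}$ is an orthonormal natural basis.
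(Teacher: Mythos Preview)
The paper does not supply its own proof of this proposition: it is stated as a citation of \cite[Corollary~2.4]{SPP} and used as a black box. Your argument is correct and is precisely the natural one; in fact the same Cauchy--Schwarz splitting $(v_i)(w_ic_{ki})$ followed by an interchange of sums is exactly the technique the authors employ when they do carry out analogous estimates later in the paper (see the proof of Proposition~\ref{prop:HEA of locally bounded degree}), so your approach is fully in line with theirs.
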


\begin{example}\label{exe:mc} Let $\{p_{ik}\}_{i,k\in\mathbb{N}}$ be the transition probabilities of a discrete-time Markov chain with states space $\mathbb{N}$; that is, $p_{ik}\in[0,1]$ for any $i,k\in\mathbb{N}$ and
$$\sum_{k=1}^{\infty}p_{ik}=1,$$
for all $i\in \mathbb{N}$. It is not difficult to see that the sequence of scalars $\{c_{ki}\}_{i,k\in\mathbb{N}}$ such that $c_{ki}=p_{ik}$ satisfy \eqref{eq:condition HEA02}. Indeed, 
$$\sum_{k=1}^{\infty}|c_{ki}|^2 = \sum_{k=1}^{\infty}|p_{ik}|^2\leq \sum_{k=1}^{\infty}p_{ik}=1.$$
In words, contrary to what happens in evolution algebras, here we have a way of associating to any discrete-time Markov chain a Hilbert evolution algebra. For more details on this connection we refer the reader to \cite{SPP}. 
\end{example}

Let $\A$ be a Hilbert evolution algebra with a natural basis $\{e_i\}_{i \in \mathbb{N}}$ and structural constants $\{c_{ki}\}_{i,k \in \mathbb{N}}$. We define the evolution operator as the linear operator $C:D(C)\longrightarrow \A$ given by its values in a natural orthonormal basis,
\begin{equation*}
C(e_i):=e_i^2=\sum_{k=1}^{\infty} c_{ki}e_k ,
\end{equation*}
and with domain $D(C)\subset \A$ defined as
\begin{equation}\label{eq:domain of C}
D(C):=\Bigg\{v=\sum_{i=1}^{\infty} v_ie_i\in\A :\, 
\sum_{k=1}^{\infty} \biggl|\sum_{i=1}^{\infty} v_ic_{ki}\biggr|^2<\infty \Bigg\}.
\end{equation}
With this we can write
\begin{equation}\label{eq:EOp00}
	C(v):=\sum_{k=1}^{\infty} \biggl(\sum_{i=1}^{\infty} v_ic_{ki}\biggr)e_k,
\end{equation}
for any $v=\sum_{i=1}^{\infty} v_i e_i \in D(C)$.
In the general case the operator $C$ will be unbounded, thus it is important to find conditions on the structure constants to know when $C$ is closable, closed or bounded. 
	
\begin{proposition}\cite[Proposition 2.5]{SPP}\label{prop:EOcont}
Let $\A$ be a Hilbert evolution algebra with structure constants satisfying one of the following conditions:
\begin{enumerate}[label=(\roman*)]
    \item \label{item:condition continuity C}
    $\displaystyle\sum_{k=1}^{\infty}\sum_{i=1}^{\infty}|c_{ki}|^2<\infty.$ \vspace{0.2cm}
   	
    \item (Schur Test) There exists $\alpha_k, \beta_i>0$, $i,k\in \mathbb{N}$ and $M_1, M_2>0$ such that
    \begin{equation}\label{eq:schur test}
    \begin{array}{l}
    \displaystyle\sum_{k=1}^{\infty}|c_{ki}|\alpha_k\leq M_1\beta_i, \quad\text{ for all } i\in \mathbb{N}, \\[1.1ex]
    \displaystyle\sum_{i=1}^{\infty}|c_{ki}|\beta_i\leq M_2\alpha_k,
    \quad\text{ for all } k\in \mathbb{N}.
    \end{array}
    \end{equation}
\end{enumerate}
Then $D(C)=\A$ and the evolution operator $C:\A\longrightarrow \A$ is bounded with
$\|C\|\leq (M_1M_2)^{1/2}.$
\end{proposition}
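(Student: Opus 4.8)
The plan is to prove, under either hypothesis, that there is a constant $M>0$ with $\|C(v)\|\le M\|v\|$ for \emph{every} $v=\sum_{i=1}^{\infty}v_ie_i\in\A$. Such an estimate does double duty: the finiteness of $\|C(v)\|^2=\sum_{k=1}^{\infty}\bigl|\sum_{i=1}^{\infty}v_ic_{ki}\bigr|^2$ is precisely the membership condition in \eqref{eq:domain of C}, so $D(C)=\A$, while the inequality itself gives the operator norm bound. Thus everything reduces to a coefficient-wise estimate of $\sum_i v_i c_{ki}$ followed by a summation over $k$.

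For condition (i), I would fix $v\in\A$ and bound the $k$-th coefficient of $C(v)$ by the Cauchy--Schwarz inequality in $\ell^2$,
\[
\Bigl|\sum_{i=1}^{\infty}v_ic_{ki}\Bigr|^2\le\Bigl(\sum_{i=1}^{\infty}|v_i|^2\Bigr)\Bigl(\sum_{i=1}^{\infty}|c_{ki}|^2\Bigr)=\|v\|^2\sum_{i=1}^{\infty}|c_{ki}|^2,
\]
and then sum over $k$, interchanging the order of the resulting double sum (legitimate by Tonelli, all terms being nonnegative). This gives $\|C(v)\|^2\le\|v\|^2\sum_{k,i}|c_{ki}|^2$, so $v\in D(C)$ and $\|C\|\le\bigl(\sum_{k,i}|c_{ki}|^2\bigr)^{1/2}$; in other words $C$ is a Hilbert--Schmidt operator.

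For the Schur test, condition (ii), I would exploit the weights by splitting $|c_{ki}|=\bigl(|c_{ki}|^{1/2}\beta_i^{1/2}\bigr)\bigl(|c_{ki}|^{1/2}\beta_i^{-1/2}\bigr)$ (valid since $\beta_i>0$) and applying Cauchy--Schwarz to the inner sum:
\[
\Bigl|\sum_{i=1}^{\infty}v_ic_{ki}\Bigr|^2\le\Bigl(\sum_{i=1}^{\infty}|c_{ki}|\beta_i\Bigr)\Bigl(\sum_{i=1}^{\infty}|c_{ki}|\beta_i^{-1}|v_i|^2\Bigr)\le M_2\,\alpha_k\sum_{i=1}^{\infty}|c_{ki}|\beta_i^{-1}|v_i|^2,
\]
the last step using the second inequality in \eqref{eq:schur test}. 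Summing over $k$, interchanging the two sums (Tonelli once more), and then invoking the first inequality $\sum_{k}|c_{ki}|\alpha_k\le M_1\beta_i$ yields
\[
\sum_{k=1}^{\infty}\Bigl|\sum_{i=1}^{\infty}v_ic_{ki}\Bigr|^2\le M_2\sum_{i=1}^{\infty}\beta_i^{-1}|v_i|^2\sum_{k=1}^{\infty}|c_{ki}|\alpha_k\le M_1M_2\sum_{i=1}^{\infty}|v_i|^2=M_1M_2\|v\|^2,
\]
so $D(C)=\A$ and $\|C\|\le(M_1M_2)^{1/2}$, as claimed.

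The underlying computations are elementary; the only point that calls for care --- and the step I expect to be the main, if mild, obstacle --- is the bookkeeping behind the two interchanges of summation order and the Cauchy--Schwarz split in the Schur case. These should be carried out in $[0,+\infty]$, so that a priori possibly infinite quantities such as $\sum_i|c_{ki}|\beta_i^{-1}|v_i|^2$ pose no difficulty, and only at the end does one read off that the total is finite. It is also worth flagging that the norm bound $(M_1M_2)^{1/2}$ in the statement pertains to case (ii); case (i) instead produces the bound $\bigl(\sum_{k,i}|c_{ki}|^2\bigr)^{1/2}$.
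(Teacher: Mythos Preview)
Your argument is correct in both cases: the Cauchy--Schwarz bound in~(i) is exactly the Hilbert--Schmidt estimate, and your treatment of~(ii) is the classical weighted Schur test, with the Tonelli interchanges handled cleanly in $[0,+\infty]$. Note that the paper does not actually supply a proof of this proposition---it is quoted from \cite[Proposition~2.5]{SPP}---so there is no ``paper's own proof'' to compare against here; your write-up is the standard one and would serve perfectly well.
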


\subsection{Our contribution and outline of the paper}
The purpose of our work is to advance in the study of Hilbert evolution algebras by extending results coming from the interplay between evolution algebras and graphs. Such connection has been a subject of current research, see for instance \cite{PMP,PMP2,Elduque/Labra/2015, Elduque/Labra/2019}, and references therein. One of the open problems related to the subject is to understand the relationship between the evolution algebra determined by a graph and the evolution algebra induced by the symmetric random walk on the same graph. This question has been stated by \cite{tian,tian2} motivated in the formulation of a new landscape in discrete geometry. Partial results addressing this question have been presented recently by \cite{PMP,PMPT}, where the focus has been the search of conditions on the graph to guarantee the existence of isomorphisms between these algebras. 

Here we introduce the analogous definitions; namely, the Hilbert evolution algebra associated to a graph and the Hilbert evolution algebra associated to the symmetric random walk on a graph. Then we explore their connection through the analysis of the homomorphisms connecting them. We provide conditions under which these structures are isomorphic, in a sense to be defined later but covering the finite-dimensional case, and we extend our discussion to the existence of unitary isomorphisms. As a sideline, we point out that understanding the behavior of these structures related to an infinite graph can leave us to the development of new techniques to deal with general Hilbert evolution algebras by mean of tools of Graph Theory. 

The paper is organized as follows. Section 2 is devoted to a background of Graph Theory with preliminary results which allow to state our definitions of Hilbert evolution algebras associated to a graph. In Section 3 we formalize the concept of isomorphism between these new structures and we state and prove our main theorem. Our discussion is extended to the case of unitary isomorphisms, and the respective results are gathered in Section 4. Finally, we dedicate Section 5 to a discussion of open problems and suggestions for further research.

\section{Hilbert evolution algebras associated to a graph}\label{sec:Hilbert evolution algebras associated to a graph}



\subsection{Brief Background of Graph Theory}

Although we assume that the reader is familiar with the basic notation of Graph Theory, see for example \cite{godsil}, we review some of it in the sequel for the sake of clarity. Let $G=(V,E)$ be a locally finite graph; that is, a graph such that  $\deg(i)<\infty$ for all $i\in V$, where $\deg(i)$ denotes the degree of vertex $i$.  We say that $G$ has uniformly bounded degree if there exists  $M>0$ such that $\deg (i)\leq M,$ for all $i\in V$. Evidently if $G$ has uniformly bounded degree then $G$ is locally finite. The adjacency matrix of $G$ is denoted by $A:=A(G)=(a_{ij})$, where $a_{ij}$ is the number of edges between the vertices $i$ and $j$. Here we consider simple graphs, that is graphs without multiple edges or loops, so $a_{ii}=0$ and  $a_{ij}\in\{0,1\}$ for any $i,j\in V$. 

Our first task is to give a definition of Hilbert evolution algebra associated to a given graph. In order to do it, let us consider  a locally finite  graph $G=(V,E)$ such that $V=\mathbb{N}$ and let us add some additional structure to these objects. We shall see later that our approach trivially holds for finite graphs. Following \cite{Mohar, Mohar Woess}, we will use the $\ell^2(\mathbb{N})$ Hilbert space of sequences $\{v_i\}_{i\in\mathbb{N}}$ of scalars such that $\sum_{i=1}^{\infty}|v_i|^2<\infty$, and we denote by $\delta_j:=\{\delta_{ij}\}_{i\in\mathbb{N}}$ the standard orthonormal basis of $\ell^2(\mathbb{N})$. Remember that the standard inner product in $\ell^2(\mathbb{N})$ is given by 
\begin{equation} \nonumber
\langle v,w\rangle :=\sum_{i=1}^{\infty} v_i \overline{w}_i,
\end{equation}
where $v=\sum_{i=1}^{\infty}v_i\delta_i$ and $w=\sum_{i=1}^{\infty}w_i\delta_i$. In this framework, the adjacency matrix of $G$ can be interpreted as an operator $A$ densely defined in $\ell^2(\mathbb{N})$. That is, if $D_0$ is the dense subset of $\ell^2(\mathbb{N})$ formed by those sequences with only a finite number of non-zero components, we can define the operator $A_0:\ell^2(\mathbb{N})\longrightarrow \ell^2(\mathbb{N})$ by
\begin{equation*}
A_0(\delta_i):=\sum_{k=1}^{\infty}a_{ki} \delta_k
\end{equation*} 
and we extend it by linearity to all $D_0$, giving the domain $D(A_0)=D_0$. Remember that $a_{ki}$ equals $1$ or $0$ according whether there is, or not, an edge between vertices $k$ and $i$, respectively. This operator is well defined because $G$ is locally finite; that is, $A_0(\delta_i)\in \ell^2(\mathbb{N})$ which follows from
\begin{equation*}
\sum_{k=1}^{\infty}|a_{ki}|^2= \sum_{k=1}^{\infty}a_{ki}=\deg(i)<\infty.
\end{equation*}
The domain $D_0$ is dense in $\ell^2(\mathbb{N})$ so the adjoint $A_0^*:\ell^2(\mathbb{N})\longrightarrow\ell^2(\mathbb{N})$ is well defined. Note that the operator $A_0$ is symmetric; i.e.,
\begin{equation*}
\langle A_0 (\delta_i),\delta_k\rangle=\langle \delta_i,A_0(\delta_k)\rangle,
\quad \text{ for } i,j\in \mathbb{N},
\end{equation*}
because $a_{ik}=a_{ki}$.
Now we recall that a densely defined symmetric operator $T$ is always closable, because the adjoint $T^*$ is a closed linear operator and  $T^*$ is an extension of $T$ (see \cite[pp.38]{Schmudgen}). Thus $A_0$ is closable and their closure will be called the adjacency operator of $G$ and denoted by
\begin{equation*}\label{eq:adjacency operator01}
A:=\overline{A}_0:D(A)\longrightarrow \ell^2(\mathbb{N})
\end{equation*}
where the domain of $A$ is 
\begin{equation}\label{eq:domain of A}
D(A):=\biggl \{v=\sum_{i=1}^{\infty}v_i\delta_i\in \ell^2(\mathbb{N})\,:\, \sum_{k=1}^{\infty}\biggl|\sum_{i=1}^{\infty}
v_i a_{ki} \biggr|^2 <\infty \biggr\}.
\end{equation}
Hence, the operator is given by
\begin{equation}\label{eq:adjacency operator02}
A(v)=\sum_{k=1}^{\infty}\biggl(\sum_{i=1}^{\infty} v_ia_{ki} \biggr)\delta_k,
\end{equation}
for any $v= \sum_{i=1}^{\infty} v_i\delta_i \in D(A)$. Now we must analyze the properties of the domain of $A$ and the possibility to extend the operator to the entire space $\ell^2(\mathbb{N})$.  This analysis is simplified because $A$ is symmetric. We use that the adjacency operator is real to obtain that there exists a self-adjoint extension of $A$, which in general is not unique. 
Since this extension is unique if, and only if,  $A$ is self-adjoint \cite{Mohar,Schmudgen}, we focus our attention in the case of self-adjointness.
\begin{theorem}\label{theo:adjacency oper bounded} 
Let $G$ be a locally finite graph. Then the  adjacency operator $A:\ell^2(\mathbb{N})\longrightarrow \ell^2(\mathbb{N})$ is bounded 
if, and only if, $G$ has uniformly bounded degree. Specifically, if $M>0$ is such that 
\begin{equation}\label{eq:ubdegree} 
\deg (i)\leq M,\quad\text{ for all } i\in \mathbb{N},
\end{equation}
then $\|A\|\leq M$. 
Moreover,  $A$ is self-adjoint.
\end{theorem}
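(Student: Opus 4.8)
The plan is to prove the two implications of the equivalence separately, extract the norm bound from the first one, and then deduce self-adjointness from boundedness.

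For the direction ``uniformly bounded degree $\Rightarrow$ $A$ bounded'', I would work first on the dense subspace $D_0$ of finitely supported sequences, where every manipulation is a finite sum. Assume $\deg(i)\le M$ for all $i$. For $v=\sum_i v_i\delta_i\in D_0$, factor $a_{ki}=\sqrt{a_{ki}}\cdot\sqrt{a_{ki}}$ (legitimate since $a_{ki}\in\{0,1\}$) and apply Cauchy--Schwarz to the inner sum:
$$\Bigl|\sum_i v_i a_{ki}\Bigr|^2\le\Bigl(\sum_i |v_i|^2 a_{ki}\Bigr)\Bigl(\sum_i a_{ki}\Bigr)=\deg(k)\sum_i|v_i|^2a_{ki}\le M\sum_i|v_i|^2a_{ki}.$$
Summing over $k$ and interchanging the (finite) order of summation gives $\|A_0v\|^2\le M\sum_i|v_i|^2\deg(i)\le M^2\|v\|^2$. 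Thus $A_0$ is bounded on $D_0$ with $\|A_0\|\le M$, hence extends uniquely to a bounded operator on all of $\ell^2(\mathbb{N})$; this extension is exactly the closure $A=\overline{A_0}$, so $D(A)=\ell^2(\mathbb{N})$ and $\|A\|\le M$. (Equivalently one may invoke the Schur test of Proposition~\ref{prop:EOcont} with $c_{ki}=a_{ki}$, $\alpha_k=\beta_i=1$, $M_1=M_2=M$.)

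For the converse I would argue by contraposition: if $G$ is not of uniformly bounded degree, pick vertices $i_n$ with $\deg(i_n)\to\infty$. Each $\delta_{i_n}\in D_0\subset D(A)$ and $A\delta_{i_n}=\sum_k a_{ki_n}\delta_k$, so $\|A\delta_{i_n}\|^2=\sum_k|a_{ki_n}|^2=\deg(i_n)\to\infty$ while $\|\delta_{i_n}\|=1$, showing $A$ is unbounded. For self-adjointness, note that when the degrees are uniformly bounded $A$ is a bounded operator defined on the whole of $\ell^2(\mathbb{N})$; it is symmetric because $a_{ki}=a_{ik}$ (already verified for $A_0$, and this passes to the closure, or is checked directly on the basis $\{\delta_j\}$). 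A bounded everywhere-defined symmetric operator on a Hilbert space equals its adjoint, so $A=A^*$.

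None of these steps is deep. The only point that deserves a line of care is the transfer of the bound from $D_0$ to the closure — namely that the unique bounded extension of $A_0$ is genuinely $\overline{A_0}$, and hence that $D(A)$ from \eqref{eq:domain of A} is all of $\ell^2(\mathbb{N})$ rather than an a priori smaller set. This is the standard fact that the closure of a bounded densely defined operator is its continuous extension, and in any case the Cauchy--Schwarz estimate above already shows directly that the defining series in \eqref{eq:domain of A} converges for every $v\in\ell^2(\mathbb{N})$ once the degrees are bounded.
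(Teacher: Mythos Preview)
Your proof is correct. The paper itself does not prove the equivalence or the norm bound at all: it simply cites Mohar \cite[Theorem 3.2]{Mohar} for those statements and then gives exactly your argument for self-adjointness (bounded, everywhere defined, symmetric $\Rightarrow$ $A=A^*$). So your treatment of self-adjointness matches the paper verbatim, while for the boundedness equivalence you supply a self-contained Cauchy--Schwarz/Schur-test argument where the paper defers to the literature; this is a genuine addition rather than a different route to the same destination.
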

\begin{proof}
The first statements of the Theorem were proved by Mohar \cite[Theorem 3.2]{Mohar}. 
Let us prove that $A$ is self-adjoint. We know that $A$ is continuous, thus there exists its adjoint operator $A^*:\ell^2(\mathbb{N})\longrightarrow \ell^2(\mathbb{N})$ defined by
\begin{equation}\nonumber
\langle A^*v,w \rangle=\langle v,Aw \rangle,
\end{equation}
for all $v,w\in \ell^2(\mathbb{N})$. But $A$ is symmetric, so it follows that $A=A^*$.
\end{proof}

\subsection{Hilbert Evolution Algebra associated to a graph}
The considerations of the previous subsection allow us to carry out the following analysis.

\begin{proposition}\label{prop:HEA of uniformily bounded degree}
Let $G$ be a graph with adjacency operator $A:D(A)\longrightarrow \ell^2(\mathbb{N})$ given by equation \eqref{eq:adjacency operator02} and $\A$ be a separable Hilbert space with an orthonormal basis $\{e_i\}_{i\in\mathbb{N}}$.
Then, the algebra $\A(G)$ defined by 
\begin{equation*}\label{eq:Gea01}
e_i \cdot e_i =\displaystyle \sum_{k=1}^{\infty} a_{ki} e_k \text{ and } e_i \cdot e_j=0, \text{ if }i\neq j,
\end{equation*}
is a Hilbert evolution algebra if and only if $G$ has uniformly bounded degree. This $\A(G)$ will be called the Hilbert evolution algebra associated to $G$.
\end{proposition}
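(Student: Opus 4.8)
The plan is to verify the two defining conditions of a Hilbert evolution algebra (Definition~\ref{def:evolalg}) for the algebra $\A(G)$, using the structural constants $c_{ki}=a_{ki}$ coming from the adjacency matrix. Condition (i) is essentially free: the basis $\{e_i\}_{i\in\mathbb{N}}$ is orthonormal by hypothesis and the products $e_i\cdot e_i=\sum_k a_{ki}e_k$, $e_i\cdot e_j=0$ ($i\neq j$) are exactly of the required form, so the real content is condition (ii), the continuity of all left multiplication operators $L_v$. The key observation is that $L_v$ is, up to the natural identification of $\A$ with $\ell^2(\mathbb{N})$ sending $e_i\mapsto\delta_i$, closely tied to the adjacency operator $A$ of~\eqref{eq:adjacency operator02}. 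Indeed, for $v=\sum_i v_ie_i$ and $w=\sum_i w_ie_i$ one computes $v\cdot w=\sum_k\big(\sum_i v_iw_i a_{ki}\big)e_k$, which is $A$ applied to the sequence $(v_iw_i)_i$ obtained by coordinatewise multiplication.

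First I would prove the ``if'' direction. Assume $G$ has uniformly bounded degree with bound $M$. By Theorem~\ref{theo:adjacency oper bounded} the adjacency operator $A$ is bounded on all of $\ell^2(\mathbb{N})$ with $\|A\|\le M$. Given $v\in\A$, write $v=\sum_i v_ie_i$, so $(v_i)\in\ell^2(\mathbb{N})$; for arbitrary $w=\sum_i w_ie_i\in\A$ the sequence $(v_iw_i)_i$ satisfies $\sum_i|v_iw_i|^2\le\big(\sup_i|v_i|^2\big)\sum_i|w_i|^2\le\|v\|^2\|w\|^2$, hence lies in $\ell^2(\mathbb{N})$ with $\ell^2$-norm at most $\|v\|\,\|w\|$. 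Applying the bounded operator $A$ gives $\|L_v(w)\|=\|v\cdot w\|=\|A\big((v_iw_i)_i\big)\|\le M\|v\|\,\|w\|$, so \eqref{eq:continuity condition} holds with $M_v=M\|v\|$. Since also $\sum_k|c_{ki}|^2=\sum_k a_{ki}=\deg(i)\le M$ for all $i$, condition~\eqref{eq:condition HEA02} of Proposition~\ref{corol:manageable condition} is met, giving an alternative route to the same conclusion. Thus $\A(G)$ is a Hilbert evolution algebra.

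For the ``only if'' direction, I would argue by contrapositive: suppose $G$ is locally finite but does \emph{not} have uniformly bounded degree, so there is a sequence of vertices $i_1,i_2,\dots$ with $\deg(i_n)\to\infty$. Then I claim $L_{e_{i_n}}$ cannot be uniformly controlled; more precisely I would show a single left multiplication already fails to be continuous, or exhibit the failure of~\eqref{eq:continuity condition}. Testing on basis vectors, $L_{e_i}(e_i)=e_i\cdot e_i=\sum_k a_{ki}e_k$ has norm $\sqrt{\deg(i)}$ while $\|e_i\|=1$, so no single constant works across the family $\{L_{e_i}\}$; to defeat condition (ii) for a \emph{fixed} $v$ one builds $v=\sum_n \lambda_n e_{i_n}$ with $\lambda_n\to 0$ chosen so that $\sum_n|\lambda_n|^2<\infty$ but $\sum_n|\lambda_n|^2\deg(i_n)=\infty$ (possible since $\deg(i_n)\to\infty$, e.g.\ along a subsequence with $\deg(i_n)\ge 4^n$ take $\lambda_n=2^{-n}$), and then $\|L_v(e_{i_n})\|=|\lambda_n|\sqrt{\deg(i_n)}$ together with a suitable normalized test vector shows $L_v$ is unbounded; equivalently, $v\cdot v=\sum_n\lambda_n^2\big(\sum_k a_{k i_n}e_k\big)$ already fails to lie in $\A$, so the product does not even map $\A$ into $\A$. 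The main obstacle is this direction: one must be careful that the vertices $i_n$ can be chosen so that the neighborhoods do not overlap in a way that causes cancellation or, conversely, artificial boundedness --- using disjointness of the supports $\{k: a_{k i_n}=1\}$ (which can be arranged by passing to a sparse enough subsequence, since $G$ is locally finite) makes the norm computation $\|v\cdot v\|^2=\sum_n\lambda_n^4\deg(i_n)$ clean, and then re-scaling the $\lambda_n$ finishes the argument. Finally I would remark that when $V$ is finite every locally finite graph trivially has uniformly bounded degree, recovering the finite-dimensional statement.
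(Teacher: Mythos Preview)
Your ``if'' direction matches the paper's: both finish by invoking Proposition~\ref{corol:manageable condition} with $\sum_k|a_{ki}|^2=\sum_k a_{ki}=\deg(i)\le M$. Your direct computation via the bounded adjacency operator $A$ is a pleasant alternative, but not needed.

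For the ``only if'' direction the paper takes a much shorter route than your contrapositive construction. It simply computes $\|L_{e_i}(e_i)\|^2=\sum_k|a_{ki}|^2=\deg(i)$ and bounds this by $K^2$ for a \emph{uniform} constant $K$. You correctly noticed that Definition~\ref{def:evolalg}(ii) only supplies per-$v$ constants $M_v$, and this is what drove you to build an explicit bad $v$; but a uniform bound is in fact available. By commutativity $L_w(v)=L_v(w)$, so for each fixed $v$ the family $\{L_w:\|w\|\le 1\}$ is pointwise bounded by $\|L_v\|$, and the Banach--Steinhaus theorem yields $K:=\sup_{\|w\|\le 1}\|L_w\|<\infty$. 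This one observation replaces your entire construction.

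Separately, your explicit example has a numerical slip. With $\deg(i_n)\ge 4^n$ and $\lambda_n=2^{-n}$ one gets $\|L_v(e_{i_n})\|=|\lambda_n|\sqrt{\deg(i_n)}\ge 1$, which is bounded below but does not show $L_v$ is unbounded; and $\sum_n\lambda_n^4\deg(i_n)\le\sum_n 4^{-n}<\infty$, so $v\cdot v$ may well lie in $\A$, contrary to your claim. Choosing a faster-growing subsequence, e.g.\ $\deg(i_n)\ge 16^n$, repairs the unboundedness claim. The disjoint-neighbourhoods worry is also unnecessary: with real $\lambda_n$ every term $\lambda_n^2 a_{k i_n}$ is nonnegative, so no cancellation can occur and $\|v\cdot v\|^2\ge\sum_n\lambda_n^4\deg(i_n)$ holds regardless of overlaps.
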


\begin{proof}
If $\A(G)$ is a Hilbert evolution algebra, there exists a constant $K>0$ such that $\|L_v w\|\leq K\|w\|$, for all $v,w\in\A(G)$. In particular $\|L_{e_i} e_i\|\leq K$, for all $i\in\mathbb{N}$, that is
\begin{equation*}
\deg(i)=\sum_{k=1}^\infty a_{ki}=\sum_{k=1}^\infty |a_{ki}|^2=\|L_{e_i} e_i\|^2\leq K^2,
\end{equation*}
for all $i\in\mathbb{N}$. That is, $G$ is of uniformly bounded degree.  
Reciprocally, let $M>0$ such that $\deg(i)\leq M$ for all $i\in\mathbb{N}$. Then,
\begin{equation*}
\sum_{i=1}^\infty |a_{ki}|^2 =\sum_{i=1}^\infty a_{ki} =\deg(i)\leq M,\quad\text{for all } i\in\mathbb{N}.
\end{equation*}
Thus, by Proposition \ref{corol:manageable condition} we can conclude that $\A(G)$ is a Hilbert evolution algebra.

\end{proof}


It is worth pointing out that our results shows the strong relation between the concepts. On one hand we see that the adjacency operator is bounded if, and only if, it is of uniformly bounded degree (see Theorem \ref{theo:adjacency oper bounded}). On the other hand, Proposition \ref{prop:HEA of uniformily bounded degree} says that these are the only class of graphs which induces a Hilbert evolution algebra $\A(G)$. Next, we see that in this framework, the evolution operator is well behaved.

\begin{proposition}\label{prop:ev_op_cont}
If $G$ has uniformly bounded degree, the operator $C:\A(G)\longrightarrow\A(G)$ is bounded.
\end{proposition}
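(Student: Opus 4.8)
The plan is to reduce the statement to an application of Proposition~\ref{prop:EOcont}\ref{item:condition continuity C} or, more directly, to observe that for the Hilbert evolution algebra $\A(G)$ the evolution operator $C$ is precisely the adjacency operator $A$ under the identification $e_i\leftrightarrow\delta_i$, so that boundedness of $C$ follows immediately from Theorem~\ref{theo:adjacency oper bounded}. Indeed, the structural constants of $\A(G)$ are $c_{ki}=a_{ki}$, hence by \eqref{eq:domain of C} the domain $D(C)$ coincides with $D(A)$ in \eqref{eq:domain of A}, and by \eqref{eq:EOp00} and \eqref{eq:adjacency operator02} the action of $C$ on $D(C)$ agrees with that of $A$. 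Since $G$ has uniformly bounded degree, Theorem~\ref{theo:adjacency oper bounded} gives $D(A)=\ell^2(\mathbb{N})$ and $\|A\|\le M$, where $M$ is a uniform bound for the degrees; transporting this through the isometry $e_i\leftrightarrow\delta_i$ yields $D(C)=\A(G)$ and $\|C\|\le M$.

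First I would make the identification explicit: fix the orthonormal basis $\{e_i\}_{i\in\mathbb{N}}$ of $\A(G)$ and the standard basis $\{\delta_i\}_{i\in\mathbb{N}}$ of $\ell^2(\mathbb{N})$, and let $U:\A(G)\to\ell^2(\mathbb{N})$ be the unique unitary map sending $e_i$ to $\delta_i$. Then I would check on basis vectors that $U C(e_i)=U(e_i^2)=U\big(\sum_k a_{ki}e_k\big)=\sum_k a_{ki}\delta_k=A_0(\delta_i)$, so $U C U^{-1}$ extends $A_0$; since both $C$ and $A$ are the closures of their restrictions to the finitely-supported vectors (for $A$ this is how $A=\overline{A}_0$ is defined, and for $C$ one checks directly that $D_0\subset D(C)$ and that $C$ restricted to $D_0$ is closable with the same closure, using symmetry of the constant matrix $(a_{ki})$), one concludes $U C U^{-1}=A$. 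Alternatively — and this avoids any fuss about closures — I would simply verify the hypothesis of Proposition~\ref{prop:EOcont}\ref{item:condition continuity C} directly via the Schur test \eqref{eq:schur test} with $\alpha_k=\beta_i=1$: the bounded-degree hypothesis gives $\sum_{k}|c_{ki}|=\sum_k a_{ki}=\deg(i)\le M$ and $\sum_i |c_{ki}|=\deg(k)\le M$, so Proposition~\ref{prop:EOcont} applies with $M_1=M_2=M$, yielding $D(C)=\A(G)$ and $\|C\|\le M$.

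The bulk of the work is bookkeeping; the only mildly delicate point is making rigorous the claim that the closure of $C|_{D_0}$ is $C$ itself (i.e.\ that the domain $D(C)$ in \eqref{eq:domain of C} is exactly the closure domain), so that the identification with $A=\overline{A}_0$ is legitimate. I expect this to be the main obstacle, but it is dissolved entirely by taking the Schur-test route, which never mentions closures: once \eqref{eq:schur test} holds the operator defined on $D(C)$ by \eqref{eq:EOp00} is automatically everywhere-defined and bounded. I would therefore present the Schur-test argument as the proof, and relegate the remark that $C$ "is" the adjacency operator $A$ to a sentence afterwards, since it is conceptually illuminating for the isomorphism discussion in Sections~3 and~4 even though it is not needed for this proposition.
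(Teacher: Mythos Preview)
Your proposal is correct and the argument you ultimately recommend --- applying the Schur test of Proposition~\ref{prop:EOcont} with $\alpha_k=\beta_i=1$ and $M_1=M_2=M$ --- is exactly the paper's proof; the only slip is that you cite item~\ref{item:condition continuity C} (the Hilbert--Schmidt condition) when you mean item~(ii), the Schur test. Your first approach, identifying $C$ with $A$ via the unitary $U(e_i)=\delta_i$, is not used here but is precisely what the paper establishes in the proposition immediately following this one, so your instinct to mention it is well placed.
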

\begin{proof}
Recall that the structure constants are given by the adjacency operator, i.e., $c_{ij}=a_{ij}$. Again, the symmetry of $A$ implies 
\begin{equation}\nonumber
\deg(i)=\sum_{k=1}^{\infty} a_{ki}=\sum_{k=1}^{\infty} a_{ik}\leq M, \quad\text{ for all } i\in \mathbb{N}.
\end{equation}
Thus we can apply the Schur test \eqref{eq:schur test} of Proposition \ref{prop:EOcont} to conclude that $C$ is bounded.
\end{proof}

For the next result we recall the following definition (see \cite{Sunder}). Let $\mathcal{H, K}$ be Hilbert spaces and $T:\mathcal{H}\longrightarrow\mathcal{H}$, $S:\mathcal{K}\longrightarrow\mathcal{K}$ two bounded operators. We say that they are unitarily equivalent if there exists a unitary operator $U:\mathcal{H}\longrightarrow\mathcal{K}$ such that $S=UTU^{*}$. This concept describes when two bounded operators in Hilbert spaces are essentially the same.

\begin{proposition}
Let be a graph with uniformly bounded degree and $\A$ be a separable Hilbert space with an orthonormal basis $\{e_i\}_{i\in\mathbb{N}}$. Then, the adjacency operator $A:\ell^2(\mathbb{N})\longrightarrow \ell^2(\mathbb{N})$ and the evolution operator $C:\A(G)\longrightarrow\A(G)$ 
are unitarily equivalent. In particular this implies that $C$ is self-adjoint.
\end{proposition}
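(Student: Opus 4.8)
The plan is to exhibit an explicit unitary $U:\ell^2(\mathbb{N})\longrightarrow\A(G)$ intertwining $A$ and $C$, and then deduce self-adjointness of $C$ from that of $A$ (Theorem \ref{theo:adjacency oper bounded}). The natural candidate is the map sending the standard basis of $\ell^2(\mathbb{N})$ to the orthonormal natural basis of $\A(G)$: set $U(\delta_i):=e_i$ for every $i\in\mathbb{N}$, and extend by linearity and continuity. Since $\{\delta_i\}_{i\in\mathbb{N}}$ and $\{e_i\}_{i\in\mathbb{N}}$ are both orthonormal bases of their respective (separable) Hilbert spaces, $U$ is a well-defined bounded operator with bounded inverse $U^{-1}(e_i)=\delta_i$, and it is isometric on the dense subspace of finite linear combinations, hence $U$ is unitary with $U^*=U^{-1}$.

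The next step is to verify the intertwining relation $C=UAU^{*}$, equivalently $CU=UA$, and for this it suffices to check equality on the dense subspace $D_0$ of finitely-supported sequences, since both $C$ and $A$ are bounded (Proposition \ref{prop:ev_op_cont} and Theorem \ref{theo:adjacency oper bounded}). On basis elements we compute $UA(\delta_i)=U\bigl(\sum_{k=1}^{\infty}a_{ki}\delta_k\bigr)=\sum_{k=1}^{\infty}a_{ki}e_k$, where the interchange of $U$ with the infinite sum is justified by continuity of $U$ and convergence of the series in $\ell^2(\mathbb{N})$ (which holds because $G$ is locally finite). On the other hand $CU(\delta_i)=C(e_i)=e_i\cdot e_i=\sum_{k=1}^{\infty}a_{ki}e_k$ by the defining relations of $\A(G)$ and the definition of the evolution operator. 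Thus $CU=UA$ on $D_0$, and by density and boundedness this extends to all of $\ell^2(\mathbb{N})$, giving $C=UAU^{*}$; hence $A$ and $C$ are unitarily equivalent.

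Finally, self-adjointness of $C$ follows formally: $C^{*}=(UAU^{*})^{*}=UA^{*}U^{*}=UAU^{*}=C$, using that $A=A^{*}$ by Theorem \ref{theo:adjacency oper bounded} and that $U$ is unitary. I do not expect a serious obstacle here; the only points requiring a little care are the justification that $U$ is genuinely unitary (one must invoke that an orthonormal basis of a separable Hilbert space yields, via $\delta_i\mapsto e_i$, a surjective isometry — essentially the coordinate isomorphism with $\ell^2$) and the interchange of $U$ with infinite sums, both of which are standard once boundedness is in hand. The conceptual content is simply that $\A(G)$ is, as a Hilbert space equipped with its evolution operator, a relabelled copy of $(\ell^2(\mathbb{N}),A)$.
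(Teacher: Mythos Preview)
Your proof is correct and follows essentially the same approach as the paper: define the unitary $U(\delta_i)=e_i$ between the two separable Hilbert spaces, use boundedness of $A$ and $C$ (from Theorem \ref{theo:adjacency oper bounded} and Proposition \ref{prop:ev_op_cont}) together with continuity to establish the intertwining relation $CU=UA$, and then read off self-adjointness of $C$ from that of $A$. The only minor difference is presentational: the paper verifies $C(U(v))=U(A(v))$ directly for arbitrary $v$ via the series representations \eqref{eq:EOp00} and \eqref{eq:adjacency operator02}, whereas you check it on basis vectors and extend by density, but these are equivalent once boundedness is in hand.
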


\begin{proof}
First, note that, by the uniformly boundedness  of $G$ we can apply the Theorem \ref{theo:adjacency oper bounded} to obtain that $A:\ell^2(\mathbb{N})\longrightarrow \ell^2(\mathbb{N})$ is well defined, bounded and self-adjoint. 
On the other hand, by Proposition \ref{prop:ev_op_cont}, $C:\A(G)\longrightarrow\A(G)$  is bounded.
Next, note that the spaces $\ell^2(\mathbb{N})$ and $\A$ are both Hilbert separable, hence they are isometrically isomorphic. This implies that there is an unitary operator $U:\ell^2(\mathbb{N})\longrightarrow \A$ taking the orthonormal basis $\{\delta_i\}_{i\in\mathbb{N}}$ into the orthonormal basis $\{e_i\}_{i\in\mathbb{N}}$, that is $U(\delta_i):=e_i$. Thus, by the continuity of the operators and representations \eqref{eq:EOp00} and \eqref{eq:adjacency operator02} we obtain 
\begin{equation}\label{eq:evol vs adja}
C(U(v))=\sum_{k=1}^{\infty}\biggl(\sum_{i=1}^{\infty}v_i a_{ki}  \biggr)e_k= U(A(v)),
\end{equation}
for any $v=\sum_{i=1}^{\infty}v_i\delta_i\in \ell^2(\mathbb{N})$. Hence we can conclude that $C U=UA$, i.e., they are unitarily equivalent, because $U^*=U^{-1}$.
\end{proof}



\subsection{Hilbert Evolution Algebra associated to the symmetric random walk on a graph}

There is a second way to define an evolution algebra associated to a graph 
$G=(V,E)$; it is the one induced by the symmetric random walk ($\SRW$) on $G$. The $\SRW$ is a discrete-time Markov chain $\{X_n\}_{n\geq 0}$ with state space given by $V$ and transition probabilities given by
\begin{equation}\nonumber
\mathbb{P}(X_{n+1}=k|X_{n}=i)=\frac{a_{ki}}{\deg(i)},
\end{equation}
where $i,k,n\in \mathbb{N}$ and $\deg(i)=\sum_{k=1}^\infty a_{ki}.$ Roughly speaking, the sequence of random variables $\{X_n\}_{n\geq 0}$ denotes the set of positions of a particle walking around the vertices of $G$; at each discrete-time step the next position is selected at random from the set of neighbors of the current state. In other words, we can introduce the transition operator of $G$, defined in the basis by
\begin{equation*}
P(\delta_i)=\sum_{k=1}^\infty p_{ik}\delta_k  \end{equation*}
where $p_{ik}:=a_{ki}/\deg(i),$ for $i,k \in \mathbb{N}$. This vector is well defined because 
\begin{equation*}
\|P(\delta_i)\|^2=\sum_{k=1}^\infty \biggr|\frac{a_{ki}}{\deg(i)} \biggl|^2=\frac{\sum_{k=1}^\infty a_{ki}}{\deg(i)^2}=\frac{1}{\deg(i)}\leq 1.
\end{equation*}
Note that we can do a similar analysis to the case of the adjacency operator, that is, analyze the domain of the operator and conditions for boundedness. However, it is not necessary, because, as noticed in Example \ref{exe:mc}, the transition probabilities induce a Hilbert evolution algebra. 

\begin{proposition}\label{prop:SRWhea}
Let $G$ be a graph, $\A$ be a separable Hilbert space and  $\{e_i\}_{i\in\mathbb{N}}$ be an orthonormal basis of $\A$. Then we have a Hilbert evolution algebra associated to a symmetric random walk on $G$, denoted by $\A_{\RW}(G)$ and  defined by the product 
\begin{equation}
\nonumber 
e_i \cdot e_i =\displaystyle \sum_{k=1}^{\infty} \frac{a_{ki}}{\deg(i)} e_k \quad  \text{ and }\quad e_i \cdot e_j=0, \text{ if }i\neq j.
\end{equation}
\end{proposition}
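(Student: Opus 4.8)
The plan is to verify the two defining conditions of a Hilbert evolution algebra (Definition \ref{def:evolalg}) for the product given in the statement, relying on the sufficient condition of Proposition \ref{corol:manageable condition}. We set the structure constants to be $c_{ki} = a_{ki}/\deg(i)$ for $i,k \in \mathbb{N}$, where we may assume every vertex has degree at least $1$ (isolated vertices contribute a zero row and cause no difficulty, or can be excluded). Conditions \eqref{eq:ea03} and \eqref{eq:ea04} hold by construction, so the only thing to check is that these constants satisfy the ``manageable condition'' \eqref{eq:condition HEA02}, i.e. that
\begin{equation*}
K := \sup\Bigl\{\, \sum_{k=1}^{\infty} |c_{ki}|^2 \,:\, i \in \mathbb{N} \,\Bigr\} < +\infty.
\end{equation*}

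First I would carry out the elementary estimate, which is exactly the one already displayed just before the statement: for each fixed $i$,
\begin{equation*}
\sum_{k=1}^{\infty} |c_{ki}|^2 = \sum_{k=1}^{\infty} \frac{a_{ki}^2}{\deg(i)^2} = \frac{1}{\deg(i)^2}\sum_{k=1}^{\infty} a_{ki} = \frac{\deg(i)}{\deg(i)^2} = \frac{1}{\deg(i)} \leq 1,
\end{equation*}
using that $a_{ki}\in\{0,1\}$ so $a_{ki}^2 = a_{ki}$, and that $G$ is locally finite so $\deg(i) = \sum_{k} a_{ki} < \infty$. Hence $K \leq 1 < +\infty$, uniformly in $i$, with no hypothesis on $G$ beyond local finiteness (which is standing throughout the paper). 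Alternatively, one can observe that the structure constants are precisely the transition probabilities $p_{ik}$ of the $\SRW$ transposed, so the claim is an instance of Example \ref{exe:mc}: to any discrete-time Markov chain on $\mathbb{N}$ one associates a Hilbert evolution algebra, and here the chain is the symmetric random walk.

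Then I would invoke Proposition \ref{corol:manageable condition}: since $\A$ is a separable Hilbert space with orthonormal basis $\{e_i\}_{i\in\mathbb{N}}$ and the constants $\{c_{ki}\}$ satisfy \eqref{eq:condition HEA02}, the algebra $\A_{\RW}(G)$ with the stated product admits a Hilbert evolution algebra structure, meaning in particular that condition (ii) of Definition \ref{def:evolalg} — continuity of all left multiplications $L_v$ — is automatically fulfilled. This completes the proof. Honestly, there is no real obstacle here: the content is entirely in the uniform bound $\sum_k |c_{ki}|^2 \le 1$, which is immediate, and the rest is a citation. The only point worth a sentence of care is that, unlike Proposition \ref{prop:HEA of uniformily bounded degree}, no assumption of uniformly bounded degree is needed, because dividing by $\deg(i)$ normalizes each ``row'' of constants regardless of how large the individual degrees are — this is precisely the contrast the authors will want to highlight after the statement.
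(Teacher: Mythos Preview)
Your proposal is correct and follows essentially the same approach as the paper: both verify the hypothesis of Proposition~\ref{corol:manageable condition} via the elementary bound on $\sum_k |c_{ki}|^2$, with the paper phrasing it as an instance of Example~\ref{exe:mc} (noting $\sum_k |a_{ki}/\deg(i)|=1$) while you compute $\sum_k |c_{ki}|^2 = 1/\deg(i)\le 1$ directly. The content is identical.
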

\begin{proof}
As in Example \ref{exe:mc}, we can use the Proposition \ref{corol:manageable condition}, because
\begin{equation*}
\sum_{k=1}^{\infty} \biggr|\frac{a_{ki}}{\deg(i)}\biggl|=1.
\end{equation*}
\end{proof}



Note that in the definition of $\A_{\RW}(G)$ we do not need the condition of the uniform boundedness of the graph. The reason for this are the weights  $1/\deg(i)$, or in other words, the probability condition over the structure constants.

\section{Existence of isomorphisms between $\A_{\RW}(G)$ and $\A(G)$}

\subsection{Definitions and main result}
One of the open questions related to the study of evolution algebras and graphs is the connection between $\A_{\RW}(G)$ and $\A(G)$. This issue has been partially answered by \cite{PMP,PMPT}, where the authors obtain sufficient and necessary conditions under which both objects are isomorphic for a finite non-singular graph. Our purpose is to extended that study to the context of Hilbert evolution algebras associated to a graph with infinite many vertices. Let us start with a preliminary definitions.

\begin{definition}
Let $\A$ and $\A'$ be Hilbert evolution algebras and $f:\A\longrightarrow \A'$ a linear operator. We say that:
\begin{enumerate} [label=(\roman*)]
\item $f$ is a {\it homomorphism of Hilbert evolution algebras} if it is an algebra homomorphism and it is continuous.
\item $f$ is an isomorphism of Hilbert evolution algebras if it is homomorphism of Hilbert evolution, there exist $f^{-1}$  and it is continuous. In this case we say that $\A$ and $\A'$ are isomorphic as Hilbert evolution algebras, and we denote it by $\A\cong \A'$.

\item $f$ is an unitary homomorphism of Hilbert evolution algebras if it is an isometric operator and is an isomorphism of Hilbert evolution algebras.  In this case we say that $\A$ and $\A'$ are unitarily isomorphic as Hilbert evolution algebras,  and we denote it by $\A\cong_U \A'$.

\end{enumerate}
\end{definition}

\begin{definition}
We say that an infinite graph is non-singular if the adjacency operator $A$ satisfies $\ker A=\{0\}$.
\end{definition}

\begin{theorem}\label{theo:criterio}
Let $G$ be a graph with uniformly bounded degree. If $G$ is a regular or a biregular graph, then $\A_{\RW}(G)\cong \A(G)$. Moreover, if $G$ is non-singular and $\A_{\RW}(G)\cong \A(G)$ then $G$ is a regular or a biregular graph. 
\end{theorem}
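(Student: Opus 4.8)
The natural strategy is to mimic the finite-dimensional argument of \cite{PMP,PMPT} but to pay attention to boundedness and continuity of the operators involved, exploiting the explicit form of the structural constants of $\A(G)$ and $\A_\RW(G)$.

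\emph{Sufficiency (regular or biregular $\Rightarrow$ isomorphic).} Fix orthonormal natural bases $\{e_i\}_{i\in\bbN}$ of $\A(G)$ and $\{e_i'\}_{i\in\bbN}$ of $\A_\RW(G)$, with structural constants $a_{ki}$ and $a_{ki}/\deg(i)$ respectively. An algebra homomorphism $f$ sending $e_i\mapsto \lambda_i e_i'$ must satisfy $f(e_i\cdot e_i)=f(e_i)\cdot f(e_i)$, i.e. $\lambda_i\sum_k a_{ki} e_k' = \lambda_i^2 \sum_k (a_{ki}/\deg(i)) e_k'$, so it suffices that $\lambda_i/\deg(i)=1/\lambda_k$ whenever $a_{ki}=1$; equivalently $\lambda_i\lambda_k=\deg(i)$ for every edge $\{i,k\}$. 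If $G$ is $r$-regular, take $\lambda_i=\sqrt r$ for all $i$. If $G$ is biregular with parts of degrees $r$ and $s$, take $\lambda_i=\sqrt r$ on the degree-$s$ side and $\lambda_i=\sqrt s$ on the degree-$r$ side; then $\lambda_i\lambda_k=\sqrt{rs}$ on each edge, and $\deg(i)\in\{r,s\}$ — wait, one must be careful: on an edge $\{i,k\}$ with $\deg(i)=r$, $\deg(k)=s$, we need $\lambda_i\lambda_k=\deg(i)=r$ \emph{and} $=\deg(k)=s$, which fails unless $r=s$; so instead the correct homomorphism is the scaled one $f(e_i)=\mu_i e_i'$ solving $\mu_i\mu_k=\deg(i)$ read as a constraint only at the source — this is exactly the asymmetry handled by defining $f$ via $e_i\mapsto \deg(i)\, \varphi(e_i)$ where $\varphi$ is a fixed diagonal isometry-type map; I would reproduce the precise diagonal scalars from the finite case, check that in the regular case they are constant ($\sqrt r$) and in the biregular case they take two values, and then verify that the resulting diagonal operator and its inverse are \emph{bounded} because in both cases the $\lambda_i$ (and $1/\lambda_i$) take only finitely many values — here the hypothesis \emph{uniformly bounded degree} is what guarantees $\A(G)$ is a Hilbert evolution algebra in the first place, and finitely many distinct degrees on a connected biregular graph keeps the diagonal operator bounded. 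Continuity of $f$ and $f^{-1}$ then follows from boundedness of a diagonal operator with bounded diagonal, so $f$ is an isomorphism of Hilbert evolution algebras.

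\emph{Necessity (non-singular and isomorphic $\Rightarrow$ regular or biregular).} Suppose $f:\A_\RW(G)\to\A(G)$ is an isomorphism of Hilbert evolution algebras. One first shows that $f$ must be \emph{diagonal} with respect to the natural bases: this is where non-singularity ($\ker A=\{0\}$) enters, exactly as in the finite non-singular case, forcing $f(e_i)=\lambda_i e_{\sigma(i)}$ for some permutation $\sigma$ and nonzero scalars $\lambda_i$; I would adapt the finite argument (which uses that $e_i\cdot e_j=0$ for $i\neq j$, together with injectivity of $f$ and of the evolution operator, to pin down the image of each basis vector) to the infinite setting, using the continuity of $f$ and the density of finitely-supported vectors. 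Once $f$ is diagonal, writing out $f(e_i\cdot e_i)=f(e_i)\cdot f(e_i)$ yields, for every edge $\{i,k\}$ of $G$, the relation $\lambda_i \lambda_k = \deg(\sigma^{-1}(i))$ (up to relabelling), from which $\lambda_i\lambda_k$ is constant along edges; combining the relation along a path $i - k - j$ gives $\deg$ alternating in a two-valued pattern on any connected component, i.e. the graph is regular or biregular. The boundedness of $f$ and $f^{-1}$ then forces the two degree-values to be finite in number, consistent with uniform boundedness.

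\emph{Main obstacle.} The delicate point is the necessity direction: proving that an isomorphism of Hilbert evolution algebras must be diagonal in the natural bases. In finite dimensions this is linear algebra using $\ker A=\{0\}$; in the Hilbert setting one must make sure the argument survives the passage to infinite sums and uses only continuity (not any stronger structure). Concretely, one needs: (a) $f$ maps an orthonormal natural basis vector to a scalar multiple of a single natural basis vector, which requires controlling the behavior of $f$ on the image of the (possibly unbounded in general, but here bounded) evolution operator; and (b) that the resulting permutation $\sigma$ and scalars $\lambda_i$ are compatible with continuity of both $f$ and $f^{-1}$. I expect the cleanest route is to first prove $f$ carries the annihilator structure and the "squares" subspace $\overline{\operatorname{span}}\{e_i\cdot e_i\}$ to the corresponding objects in $\A(G)$, then use non-singularity to transfer a basis-diagonalization, and finally read off the degree constraint edge by edge exactly as in \cite{PMPT}.
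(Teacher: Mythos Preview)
Your overall plan matches the paper's: sufficiency via an explicit diagonal map with bounded diagonal and bounded inverse (this is Proposition~\ref{theo:generalization}), and necessity by first forcing any isomorphism to be diagonal in the natural bases using non-singularity (this is exactly the content of Proposition~\ref{prop:homo between HEA} and Corollary~\ref{corol:isomorphism of HEA}) and then extracting a degree constraint along edges (Proposition~\ref{theo:sufficient}).

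There is, however, a concrete computational error that propagates through both directions and is the reason you could not handle the biregular case. With $f(e_i)=\lambda_i e_i'$ one has
\[
f(e_i\cdot e_i)=f\Bigl(\sum_k a_{ki}e_k\Bigr)=\sum_k a_{ki}\,\lambda_k\, e_k',
\]
not $\lambda_i\sum_k a_{ki}e_k'$ as you wrote; the map is diagonal, not a scalar. Equating this with $f(e_i)\cdot f(e_i)=\lambda_i^{2}\sum_k (a_{ki}/\deg(i))\,e_k'$ gives the correct constraint $\lambda_k=\lambda_i^{2}/\deg(i)$ for every neighbour $k$ of $i$, which is \emph{not} symmetric in $i$ and $k$ and is therefore perfectly compatible with two distinct degrees. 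For a $(d_1,d_2)$-biregular graph the solution is $\lambda_i=(d_1^{2}d_2)^{1/3}$ on $V_1$ and $\lambda_i=(d_1d_2^{2})^{1/3}$ on $V_2$; these are precisely the scalars the paper writes down, and continuity of $f$ and $f^{-1}$ follows because the $\lambda_i$ take only two positive values. Your symmetric relation $\lambda_i\lambda_k=\deg(i)$ would force $\deg(i)=\deg(k)$ on every edge, which is why it only covers the regular case. The same correction applies in your necessity sketch: from $\lambda_k=\lambda_i^{2}/\deg(i)$ one first gets that all neighbours of a fixed vertex share the same $\lambda$-value, and then, applying the relation in the reverse direction, that any two vertices at distance $2$ have equal degree; this is what yields regular or biregular.
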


The proof of Theorem \ref{theo:criterio}, which holds for graphs with infinitely many vertices, is carried out by adapting to our framework the arguments developed in \cite[Theorem 2.3]{PMPT}. We recall that now the notion of isomorphism includes continuity. For the sake of clarity we left the details of the proof for subsection \ref{sec:proofs}.

\begin{corollary}\label{cor:theoCRR}
\cite[Theorem 2.3]{PMPT} Let $G$ be a finite graph. If $G$ is a regular or a biregular graph, then $\A_{\RW}(G)\cong \A(G)$. Reciprocally, if $G$ is a non-singular graph then, $\A_{\RW}(G)\cong \A(G)$ implies that $G$ is a regular or a biregular graph. 
\end{corollary}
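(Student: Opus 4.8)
The plan is to deduce Corollary~\ref{cor:theoCRR} directly from Theorem~\ref{theo:criterio} by specializing to the finite-dimensional case. First I would note that every finite graph is automatically of uniformly bounded degree: if $|V|=n$, then $\deg(i)\le n-1$ for all $i\in V$, so the bound \eqref{eq:ubdegree} holds with $M=n-1$. Consequently the standing hypothesis of Theorem~\ref{theo:criterio} is met, and both $\A(G)$ and $\A_{\RW}(G)$ are genuine Hilbert evolution algebras built on the finite-dimensional space $\mathbb{K}^{n}$.

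Next I would reconcile the two notions of isomorphism at play. When $G$ is finite the ambient Hilbert space is $\mathbb{K}^{n}$, on which every linear map is automatically bounded, hence continuous, and every linear bijection has a linear (therefore continuous) inverse. Since in finite dimensions a Schauder basis is simply a Hamel basis, the Hilbert evolution algebras $\A(G)$ and $\A_{\RW}(G)$ coincide with the ordinary evolution algebras of the same name studied in \cite{PMPT}, and a Hilbert evolution algebra isomorphism is nothing but an ordinary evolution algebra isomorphism. Thus the relation $\A_{\RW}(G)\cong\A(G)$ here is equivalent to the isomorphism considered in \cite{PMPT}.

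With these two reductions in hand, the corollary follows immediately. For the first assertion, applying Theorem~\ref{theo:criterio} to a regular or biregular finite graph yields $\A_{\RW}(G)\cong\A(G)$. For the converse, I would observe that for a finite graph the non-singularity condition $\ker A=\{0\}$ is precisely the invertibility of the adjacency matrix $A$, matching the hypothesis in \cite{PMPT}; then the second assertion of Theorem~\ref{theo:criterio}, applied to a non-singular finite graph with $\A_{\RW}(G)\cong\A(G)$, forces $G$ to be regular or biregular.

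Since the statement is a direct specialization of Theorem~\ref{theo:criterio}, I do not expect a genuine obstacle. The only step warranting explicit justification is the identification of the two isomorphism notions, which rests entirely on the automatic continuity of linear maps between finite-dimensional spaces; everything else is inherited verbatim from the theorem.
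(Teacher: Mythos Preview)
Your argument is correct and matches the paper's approach: the paper's proof is the two-line observation that any finite graph has uniformly bounded degree and that non-singularity means $\ker A=\{0\}$, so Theorem~\ref{theo:criterio} applies directly. Your added remark on the equivalence of the two notions of isomorphism in finite dimensions is a reasonable clarification but is not made explicit in the paper.
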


\begin{proof}
Note that any finite graph is locally finite and has uniformly bounded degree. Moreover, note that if $G$ is non-singular then $\ker A=\{0\}$. 
\end{proof}

We present Corollary \ref{cor:theoCRR} as stated by \cite[Theorem 2.3]{PMPT}. However, as pointed out for us recently by \cite{henao}, if $G$ is a non-singular graph, then biregularity implies regularity.

\subsection{Preliminary results for the Proof of Theorem \ref{theo:criterio}}

In what follows we shall use regular and biregular graphs. Remember that these are locally finite graphs with uniformly bounded degree so all the definitions introduced in this paper works.

\begin{proposition}\label{theo:generalization}
Let $G$ be a regular or a biregular graph. Then $\A(G) \cong \A_{\RW}(G)$.
\end{proposition}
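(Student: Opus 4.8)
The plan is to construct an explicit diagonal linear operator intertwining the two evolution products and then check that it and its inverse are bounded, which is exactly where uniform boundedness of the degree enters. Recall that $\A(G)$ has evolution product $e_i\cdot e_i=\sum_k a_{ki}e_k$ while $\A_{\RW}(G)$ has $e_i\cdot e_i=\sum_k (a_{ki}/\deg(i))e_k$; so if $G$ is $r$-regular the two structure constant families differ merely by the global scalar $1/r$, and the map $f(e_i):=r^{-1}e_i$ (or more cleanly, choosing scalars $\lambda_i$ with $\lambda_i^2=\lambda_i/\deg(i)$ in the basis, i.e. $\lambda_i=1/\deg(i)$) should be an algebra homomorphism. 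Indeed, for a diagonal map $f(e_i)=\lambda_i e_i$ one computes $f(e_i)\cdot f(e_i)=\lambda_i^2\sum_k a_{ki}e_k$ on one side and $f(e_i^2)=\sum_k a_{ki}\lambda_k e_k$ on the other, so $f$ is a homomorphism from $\A(G)$ to $\A_{\RW}(G)$ precisely when $\lambda_i^2 a_{ki}=a_{ki}\lambda_k (1/\deg(k))$ for all $i,k$; reading this only across edges $a_{ki}=1$ gives the condition $\lambda_i^2=\lambda_k/\deg(k)$ whenever $i\sim k$. Wait — I should be careful about which direction; I will set it up as a map $\A_{\RW}(G)\to\A(G)$ or $\A(G)\to\A_{\RW}(G)$ whichever makes the bookkeeping cleanest, and present both the regular and biregular cases in parallel.

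First I would treat the regular case: if $\deg(i)=r$ for all $i$, take $\lambda_i=\lambda$ constant with $\lambda^2=\lambda/r$, i.e. $\lambda=1/r$, and verify directly from the displayed products that $f(e_i)=\frac1r e_i$ sends $e_i\cdot_{\A(G)}e_i$ to $e_i\cdot_{\A_{\RW}(G)}e_i$ and kills cross terms, hence is an algebra isomorphism; boundedness of $f$ and of $f^{-1}(e_i)=r e_i$ is immediate since both are scalar multiples of the identity on an orthonormal basis, with norms $1/r$ and $r$. Next, the biregular case: here $V$ splits as $V_1\sqcup V_2$ with $\deg(i)=r_1$ for $i\in V_1$, $\deg(i)=r_2$ for $i\in V_2$, and every edge joins $V_1$ to $V_2$. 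I would look for $f(e_i)=\lambda_i e_i$ with $\lambda_i=\mu_1$ on $V_1$ and $\lambda_i=\mu_2$ on $V_2$; the homomorphism condition $\lambda_i^2=\lambda_k/\deg(k)$ across an edge $i\in V_1,k\in V_2$ becomes $\mu_1^2=\mu_2/r_2$, and across $i\in V_2,k\in V_1$ becomes $\mu_2^2=\mu_1/r_1$. This pair of equations has a positive solution (solve: $\mu_1=(r_1 r_2^2)^{-1/3}$-type expression, or just observe $\mu_1^3 = \mu_1 \mu_2/ (r_1 r_2) \cdot (\mu_1/\mu_2)$... more cleanly substitute $\mu_2=r_2\mu_1^2$ into $\mu_2^2=\mu_1/r_1$ to get $r_2^2\mu_1^4=\mu_1/r_1$, hence $\mu_1^3=1/(r_1 r_2^2)$, so $\mu_1=(r_1 r_2^2)^{-1/3}$ and then $\mu_2=r_2\mu_1^2$). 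Then $f$ is the diagonal operator with the two bounded nonzero values $\mu_1,\mu_2$, so $f$ is bounded with $\|f\|=\max\{\mu_1,\mu_2\}$ and its inverse $f^{-1}(e_i)=\mu_i^{-1}e_i$ is bounded with norm $\max\{\mu_1^{-1},\mu_2^{-1}\}$; hence $f$ is an isomorphism of Hilbert evolution algebras.

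Finally I would note that in both cases $f$ is genuinely an algebra homomorphism on all of $\A(G)$: since $f$ is continuous and linear and the identity $f(e_i\cdot e_j)=f(e_i)\cdot f(e_j)$ holds on the orthonormal natural basis, bilinearity and the continuity of the products (condition (ii) in Definition \ref{def:evolalg}, available because $G$ has uniformly bounded degree so both algebras are genuine Hilbert evolution algebras) let one pass from finite linear combinations to arbitrary elements of the Hilbert space by taking limits. The main obstacle is not the algebra identity — that is a one-line check on the basis — but rather confirming that $f^{-1}$ is \emph{bounded}; this is exactly the point where uniform boundedness of the degree is used, since it guarantees the scaling factors $\lambda_i$ stay bounded away from $0$ (they take only finitely many values, at most two), so that $f^{-1}$, which multiplies $e_i$ by $\lambda_i^{-1}$, does not have unbounded operator norm. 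Without a uniform lower bound on $\deg(i)$ — equivalently without uniform boundedness, in the regular/biregular setting the degrees are constant so this is automatic — the inverse could fail to be continuous, which is why the hypothesis is retained in the statement even though for regular and biregular graphs it is implied by local finiteness plus the two-valued degree condition.
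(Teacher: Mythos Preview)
Your approach is essentially the paper's: define a diagonal map $f(e_i)=\lambda_i e_i$, solve the homomorphism condition for the $\lambda_i$ (one value in the regular case, two in the biregular case), and then observe that because the $\lambda_i$ assume only finitely many nonzero values, both $f$ and $f^{-1}$ are bounded. The paper writes this map in the direction $\A(G)\to\A_{\RW}(G)$ with scalars $M_1=(d_1^2d_2)^{1/3}$, $M_2=(d_1d_2^2)^{1/3}$ and checks continuity of the inverse via $M_1,M_2\geq 1$.

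There is one bookkeeping slip worth fixing: your scalar $\lambda=1/r$ in the regular case is the correct value for the map $\A_{\RW}(G)\to\A(G)$, not for the direction $\A(G)\to\A_{\RW}(G)$ that you state. Indeed, with $f(e_i)=\tfrac{1}{r}e_i$ one has $f(e_i)\cdot_{\A_{\RW}}f(e_i)=\tfrac{1}{r^2}\sum_k\tfrac{a_{ki}}{r}e_k=\tfrac{1}{r^3}\sum_k a_{ki}e_k$, while $f(e_i\cdot_{\A(G)}e_i)=\tfrac{1}{r}\sum_k a_{ki}e_k$, and these disagree. The correct homomorphism condition for $f:\A(G)\to\A_{\RW}(G)$ across an edge $i\sim k$ is $\lambda_i^2/\deg(i)=\lambda_k$ (not $\lambda_i^2=\lambda_k/\deg(k)$), giving $\lambda=r$ in the regular case; the paper's biregular scalars are the cube roots $(d_1^2d_2)^{1/3}$ and $(d_1d_2^2)^{1/3}$ rather than their reciprocals. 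This does not affect the validity of the method---a diagonal algebra isomorphism with bounded, bounded-below scalars exists either way---but you should straighten out which direction you are writing and recompute the $\mu_i$ accordingly.
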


\begin{proof}Let us prove the proposition for a biregular graph, the regular case is analogous. Assume that $G=(V_1,V_2,E)$  is a $(d_1,d_2)$-biregular  graph and consider, like in \cite[Proposition 2.9]{PMPT}, the linear map $f:\mathcal{A}(G)\longrightarrow \mathcal{A}_{\RW}(G)$ defined by 
\begin{equation}\label{eq:iso}
\nonumber
f(e_i)=\left\{
\begin{array}{cl}
\left(d_1^2 d_2\right)^{1/3}\, e_i,& \text{ if }i\in V_1,\\[.2cm]
\left(d_1 d_2^2\right)^{1/3}\, e_i,& \text{ if }i\in V_2.
\end{array}\right.
\end{equation}
Clearly, $f$ is a linear isomorphism  and behaves well with respect to the algebra product of $\A(G)$. Thus $f$ is an algebra isomorphism. For the continuity of $f$, we write $M_1:=\left(d_1^2 d_2\right)^{1/3}$, $M_2:=\left(d_1 d_2^2\right)^{1/3}$, then
\begin{equation} \nonumber
\nonumber f(v)=\sum_{k=1}^{\infty}v_kf(e_k) =M_1\sum_{k\in V_1} v_ke_k + M_2\sum_{k\in V_2} v_ke_k,
\end{equation}
for $v=\sum_{k=1}^{\infty} v_ke_k \in \A(G)$. Thus
\begin{equation*}
\nonumber \|f(v)\|^2= M_1^2\sum_{k\in V_1} |v_k|^2 + M_2^2\sum_{k\in V_2} |v_k|^2 \leq M^2 \sum_{k=1}^{\infty} |v_k|^2= M^2 \|v\|^2,
\end{equation*}
where we define $M:=\max\{M_1, M_2\}$. Hence $f$ is continuous and $\|f\|\leq M$. Now, prove the continuity of the inverse $f^{-1}$ is equivalent to prove that $\|v\|\leq \|f(v)\|$ for all $v\in\A(G)$. By the previous calculations
\begin{equation}\nonumber
\|v\|^2 =\sum_{k\in V_1} |v_k|^2 + \sum_{k\in V_2} |v_k|^2 \leq M_1^2\sum_{k\in V_1} |v_k|^2 + M_2^2\sum_{k\in V_2} |v_k|^2=\|f(v)\|^2,
\end{equation}
where we use that $M_1,M_2\geq 1$. Thus $f$ is an isomorphism of Hilbert evolution algebras.
 \end{proof}

Proposition \ref{theo:generalization} proves the first affirmation of Theorem \ref{theo:criterio}. Note that this result  there is no restriction regarding $\ker A$ so it covers graphs for which $\ker A \neq \{0\}$ as well. For proving the other affirmation of Theorem \ref{theo:criterio} we appeal to the following results which extend the ones obtained in \cite{PMPT}.

If $\A$ is an Hilbert evolution algebra  with orthonormal basis $\{e_i\}_{i\in \mathbb{N}}$ and $v=\sum_{i=1}^{\infty}v_ie_i \in \A$ we denote by $\Omega_v$ the set $$\Omega_v := \{i\in \mathbb{N}: v_i \not= 0\}. $$
And to simplify notation we write $\Omega_i$ for $\Omega_{e_i}$.

\begin{proposition}\label{prop:homo between HEA}
Let $G$ be a non-singular graph with uniformly bounded degree. Let $\A$ be a separable Hilbert space, $\{e_i\}_{i\in \mathbb{N}}$ an orthonormal basis, $\A(G)$ and $\A_{\RW}(G)$ be the associated Hilbert evolution algebras. If $f:\A_{\RW}(G)\longrightarrow \A(G)$ is an  homomorphism of Hilbert evolution algebras then either f is the null map or satisfies the following,
\begin{enumerate}[label=(\roman*)]
\item \label{injectivity} $f$ is injective. 
\item  \label{partition} The family $\{\Omega_{i}\}_{i\in \mathbb{N}}$ is a partition of $\mathbb{N}$. 
\end{enumerate}

\end{proposition}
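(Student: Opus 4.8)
The plan is to mimic the finite-dimensional argument of \cite[Proposition 2.9 and surrounding]{PMPT}, but carefully tracking where continuity of $f$ is used and where the infinitude of the vertex set forces extra care. Write $f(e_i) = \sum_{k=1}^{\infty} f_{ki} e_k$ for the matrix of $f$ in the orthonormal basis; the sets $\Omega_i$ are then exactly $\{k : f_{ki}\neq 0\}$. The starting point is the algebra-homomorphism identity: since $e_i \cdot e_j = 0$ in $\A_{\RW}(G)$ for $i\neq j$, we get $f(e_i)\cdot f(e_j) = 0$ in $\A(G)$, while from $e_i\cdot e_i = \sum_k (a_{ki}/\deg(i)) e_k$ we get $f(e_i)\cdot f(e_i) = f\bigl(\sum_k (a_{ki}/\deg(i)) e_k\bigr) = \sum_k (a_{ki}/\deg(i)) f(e_k)$. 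Expanding $f(e_i)\cdot f(e_i)$ using the explicit product in $\A(G)$ (namely $e_p\cdot e_p = \sum_k a_{kp} e_k$ and orthogonality) yields $f(e_i)\cdot f(e_i) = \sum_{k}\bigl(\sum_{p} f_{pi}^2 a_{kp}\bigr) e_k$. This is the fundamental system of identities; everything else is extracted from it.

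Next I would handle the dichotomy. From $f(e_i)\cdot f(e_j)=0$ for $i\neq j$ and the product formula in $\A(G)$, one reads off $\sum_p f_{pi} f_{pj} a_{kp} = 0$ for all $k$; combined with non-singularity of $G$ ($\ker A = \{0\}$, equivalently the rows of $A$ are not annihilated by any nonzero $\ell^2$ vector) this should force, for each $p$ with $a_{\cdot p}\neq 0$, that $f_{pi} f_{pj} = 0$ — i.e.\ the supports $\Omega_i$ and $\Omega_j$ are disjoint (indices $p$ isolated in $G$ don't occur since $G$ non-singular rules out — actually here I should note that $\ker A=\{0\}$ forces every vertex to have positive degree, so $\deg(p)\geq 1$ for all $p$). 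If some $\Omega_i = \emptyset$ then $f(e_i)=0$, and then $0 = f(e_i)\cdot f(e_i) = \sum_k (a_{ki}/\deg(i)) f(e_k)$; applying this and again using that $A$ acts injectively (after rescaling by the positive weights $1/\deg$, which does not create a kernel) propagates $f(e_k)=0$ along the connected component of $i$, and non-singularity together with a connectivity/propagation argument forces $f\equiv 0$ — this is the "null map" branch. So assume all $\Omega_i\neq\emptyset$; then the $\Omega_i$ are nonempty and pairwise disjoint.

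For injectivity \ref{injectivity}: suppose $f(v)=0$ with $v = \sum_i v_i e_i$. Continuity of $f$ lets me write $f(v) = \sum_i v_i f(e_i) = \sum_k \bigl(\sum_i v_i f_{ki}\bigr) e_k$, and since for each $k$ at most one $i$ has $f_{ki}\neq 0$ (disjointness of supports), each coefficient is a single term $v_{i(k)} f_{k,i(k)}$; these vanishing for all $k\in\Omega_i$ forces $v_i = 0$ for every $i$ with $\Omega_i\neq\emptyset$, hence $v=0$. For the partition claim \ref{partition} it remains to show $\bigcup_i \Omega_i = \mathbb{N}$, i.e.\ that no index $k$ is missed by every support. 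Here is where I expect the main obstacle: in the finite case this follows from a dimension/rank count (an injective linear map between equal-dimensional spaces is surjective, so the $f(e_i)$ span, forcing full support-union), but in infinite dimensions injectivity does not give surjectivity. The fix should be to revisit the homomorphism identity $\sum_k \bigl(\sum_p f_{pi}^2 a_{kp}\bigr) e_k = \sum_k (a_{ki}/\deg(i)) f(e_k)$: the right-hand side involves $f(e_k) = \sum_m f_{mk} e_m$, so comparing $e_m$-coefficients and using disjoint supports shows that the only $e_m$ appearing on the right are those with $m\in \bigcup_k \Omega_k$, while the left-hand side forces certain $a_{kp}$-driven coefficients to be supported there too. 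Iterating this relation together with non-singularity — the operator $A$ has no nontrivial invariant coordinate subspace complement in the relevant sense because $\ker A=\{0\}$ and $G$ is (I would add the needed hypothesis or derive it) connected — should show $\bigcup_i\Omega_i$ is both $A$-invariant and co-invariant, hence all of $\mathbb{N}$. If connectedness is not assumed, the argument localizes to each connected component and the same conclusion holds componentwise, so the union is still everything. I would present the support-propagation lemma as the technical heart and then assemble \ref{injectivity} and \ref{partition} from it.
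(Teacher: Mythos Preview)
Your plan tracks the paper's proof: obtain pairwise disjoint supports $\Omega_i$ from $f(e_i)\cdot f(e_j)=0$ and $\ker A=\{0\}$, run the null/non-null dichotomy by propagating $f(e_\ell)=0$ along neighbors, read off injectivity from disjoint supports, and then argue $\bigcup_i\Omega_i=\mathbb{N}$. Two of your stated justifications, however, misattribute the work to non-singularity when something else is actually doing it.

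In the null branch you write that ``$A$ acts injectively'' forces $f(e_k)=0$ from the single relation $\sum_{k\in\mathcal{N}(i)}\tfrac{1}{\deg(i)}f(e_k)=0$. That equation is not $A$ applied to any vector, so $\ker A=\{0\}$ is irrelevant here. The paper's reason is that the $f(e_k)$ already have pairwise disjoint supports, so a vanishing linear combination with positive coefficients forces each term to vanish individually; connectedness of $G$ then carries this to every vertex.

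For the covering, the paper does not argue via invariant coordinate subspaces. It fixes $i$, chooses some $j\in\Omega_i$, takes a path $j=j_0,j_1,\ldots,j_{n+1}=i$ in $G$, and uses $f(e_{i_k})\cdot f(e_{i_k})=f(e_{i_k}^2)$ at each step to produce $i_{k+1}\in\mathcal{N}(i_k)$ with $j_{k+1}\in\Omega_{i_{k+1}}$; after finitely many steps this places $i$ in some $\Omega_\ell$. Your invariance heuristic can be made rigorous --- in the real case $\sum_p f_{pi}^2 a_{mp}$ is a sum of nonnegative terms, so the support of $f(e_i)^2$ is exactly $\mathcal{N}(\Omega_i)$, whence $\mathcal{N}(\Omega_i)\subseteq S:=\bigcup_j\Omega_j$ for every $i$ and $S$ is closed under neighbors --- but then $S=\mathbb{N}$ follows from \emph{connectedness} together with $S\neq\emptyset$, not from $\ker A=\{0\}$: injectivity of $A$ does not preclude proper coordinate subspaces closed under $A$. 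Non-singularity was spent once, in the disjoint-supports step, and plays no further role. One genuinely infinite-dimensional detail you skip in that step: before applying $\ker A=\{0\}$ to the sequence $(f_{pi}f_{pj})_p$, the paper explicitly checks it lies in $\ell^2(\mathbb{N})$ via $\sum_p|f_{pi}f_{pj}|^2\le\|f(e_i)\|^2\|f(e_j)\|^2$.
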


\begin{proof} Suppose that $f$ is not null. Let us write
\begin{equation}\label{eq:f(ei)}
f(e_i)=\sum_{k=1}^{\infty} t_{ki}e_k, \quad \text{ for all } i\in \mathbb{N},
\end{equation}
Note that the scalars $t_{ik}$ must satisfy $\sum_{k=1}^\infty |t_{ki}|^2=\|f(e_i)\|^2<\infty$. 
The condition $f(e_i)\cdot f(e_j)=0$, for any $i\neq j$, implies
\begin{equation}\label{eq:f(ei)f(ej)=0}
0=\sum_{k=1}^\infty t_{ki} t_{kj} e_k^2 =\sum_{k=1}^\infty t_{ki} t_{kj} \left(\sum_{r=1}^\infty  a_{rk} e_r\right)=\sum_{r=1}^\infty \left(\sum_{k=1}^\infty t_{ki} t_{kj} a_{rk}\right) e_r.
\end{equation}
For every $i,j\in \mathbb N$, $i\neq j$ we define $w_{ij}:=\sum_{k=1}^{\infty}t_{ki}t_{kj}e_k$. We affirm that $w_{ij} \in \A$. In fact,
\begin{equation} \nonumber
\sum_{k=1}^{\infty}|t_{ki}t_{kj}|^2  =\sum_{k=1}^{\infty}|t_{ki}|^2|t_{kj}|^2 
\leq \|f(e_j)\|^2 \sum_{k=1}^{\infty}|t_{ki}|^2 
=\|f(e_j)\|^2\|f(e_i)\|^2<\infty,
\end{equation}
where we use $|t_{kj}|^2\leq \sum_{k=1}^{\infty}|t_{kj}|^2 =\|f(e_j)\|^2$ for all $k,j\in\mathbb{N}$.
Then, we can  apply the evolution operator, 
\begin{equation} \nonumber
C(w_{ij})=\sum_{r=1}^\infty \left(\sum_{k=1}^\infty t_{ki} t_{kj} a_{rk}\right) e_r=0,
\end{equation}
where we use \eqref{eq:f(ei)f(ej)=0}. 
Now, we use \eqref{eq:evol vs adja} to obtain
\begin{equation}\nonumber
C(w_{ij})=C(U(v_{ij}))=U(A(v_{ij}))=0,
\end{equation}
where $v_{ij}:=U^{-1}(w_{ij})=\sum_{k=1}^{\infty}t_{ki}t_{kj}\delta_k\in \ell^2(\mathbb{N})$. But $U$ is an isomorphism, thus $A(v_{ij})=0$, for any $i\neq j$. This implies that $v_{ij}=0$, for any $i\neq j$, because the graph is non-singular, i.e., $\ker A=\{0\}$. Hence
\begin{equation}\label{eq:tij}
t_{ki}t_{kj}=0, \text{ for any }i,j,k\in V \text{ with }i\neq j.
\end{equation}
Thus, for $k\in \mathbb{N}$ we have $t_{ki} =0$ for all $ i\in \mathbb{N}$, or there exists at most one $i:=i(k)\in \mathbb{N}$ such that $t_{ki}  \not = 0$ and  $t_{kj} =0$ for all $j  \not = i$. Since $\Omega_{i}=\{k\in \mathbb{N}: t_{ki}\neq 0\}$, we have that
\begin{equation}\label{eq:supp}
\Omega_{i} \cap \Omega_{j}=\emptyset, \text{ for }i\neq j.
\end{equation}
To prove \ref{injectivity} we consider two cases.\\
\noindent
{\bf Case 1.} Suppose there exist $i\in \mathbb{N}$ such that $t_{ki}=0$ for all $k\in V$. Then, by \eqref{eq:f(ei)} we have $f(e_i)=0$ implying
\begin{equation}\label{eq:fei2=0}
0=f(e_i^2) =f\left(\sum_{\ell=1}^\infty \frac{a_{i\ell}}{\deg (i)} e_{\ell}\right)=\sum_{\ell=1}^\infty \frac{a_{i\ell}}{\deg(i)} f(e_{\ell}).
\end{equation}
By \eqref{eq:supp} we conclude that $f(e_{\ell})=0$ for any $\ell$ such that $a_{i\ell}=1$. Indeed, if $f(e_\ell)\neq 0$, then $t_{kl}\neq 0$ for some $k$, i.e., $k\in\Omega_\ell$ and \eqref{eq:fei2=0} says that there must be some $j\neq \ell$ with $t_{kj}\neq 0$, i.e., $k\in\Omega_j$ which is not possible by \eqref{eq:supp}. Then, for any $\ell \in \mathcal{N}(i)$ it holds that $f(e_\ell)=0$. This procedure can be repeated for any $\ell \in \mathcal{N}(i)$, i.e., we have that $f(e_v)=0$ for any $v\in\mathcal{N}({\ell})$. As we are dealing with a connected graph, this procedure can be iterated a finite number of times until to cover all the vertices of $G$. Therefore we can conclude that $f(e_{i})=0$ for any $i\in \mathbb{N}$, a contradiction, because $f$ is not null.

\noindent {\bf Case 2.} For all $i\in \mathbb{N}$ there exist $k\in \mathbb{N}$ such that $t_{ki}\neq 0$. Then, by \eqref{eq:f(ei)} we have that $f(e_i)\neq 0$ for all $i\in \mathbb{N}$ as desired. 
In this case we can prove that $f$ is injective.
Suppose that $v\in\ker f$ and $v\neq 0$ then, we have $v=\sum_{i=1}^{\infty} v_ie_i$ with some $v_j\neq 0$.
For this $j\in \mathbb{N}$ there exist at most one $\ell\in \mathbb{N}$ such that $t_{\ell j}\neq 0$.
Now, $f(v)=0$ implies 
\begin{equation}\nonumber
\sum_{i=1}^{\infty}v_if(e_i)= \sum_{k=1}^{\infty}\Big(\sum_{i=1}^{\infty}v_it_{ki}\Big )e_k=0,
\end{equation}
that is 
\begin{equation}\nonumber
\sum_{i=1}^{\infty} v_i t_{ki}=0 ,\,\,\,\text{for all } k\in \mathbb{N}.
\end{equation}
In particular $\sum_{i=1}^{\infty} v_i t_{\ell i}=0$ and by \eqref{eq:tij} we can conclude $v_jt_{\ell j}=0$, which is an absurd because both terms are different from $0$.

For \ref{partition}, let us prove that
\begin{equation*}
\displaystyle \bigcup_{i\in \mathbb{N}} \Omega_{i}= \mathbb{N}.
\end{equation*}
Let $i\in \mathbb{N}$. We want to prove that $i\in \Omega_{\ell}$, that is, $t_{il}\neq 0$, for some $\ell \in \mathbb{N}$. 
To fix notation let us rewrite \eqref{eq:f(ei)} as 
\begin{equation*}\label{eq:f(ei) 1}
f(e_s) = \sum_{h\in\Omega_s} t_{hs} e_h,\,\,\, \text{for any } s\in \mathbb{N}.
\end{equation*}
In particular $f(e_i) = \sum_{h\in\Omega_i} t_{hi} e_h\neq 0$ and we can choose some $j\in \Omega_i$.
Now, as $G$ is connected there is a finite path  $\{j=j_0,j_1,\ldots, j_n,j_{n+1}=i\}$ joining $j$ to $i$. We want to prove that every index in this set belongs to some $\Omega_{i_k}$. Note that $j_1\in \N(j)\subset \N(\Omega_i)$, which implies
\begin{equation*}
f(e_i) \cdot f(e_i)=\sum_{h\in\Omega_i} t_{hi}^2 e_h^2 = \sum_{s\in \mathcal{N}(\Omega_i)}\beta_{s} e_{s} =\,\,\beta_{j_1}e_{j_1}+ \sum_{s\in \mathcal{N}(\Omega_i)\setminus\{j_1\}}\beta_{s} e_{s}.
\end{equation*}
But $f(e_i) \cdot f(e_i)=f(e_i^2)$, hence
\begin{equation*}
f(e_i^2)=\frac{1}{\deg(i)} \,\sum_{\ell\in \mathcal{N}(i)} f(e_\ell)=\beta_{j_1}e_{j_1}+ \sum_{s\in \mathcal{N}(\Omega_i)\setminus\{j_1\}}\beta_{s} e_{s}.
\end{equation*}
Which implies, by \eqref{eq:supp}, that there exist some $i_1\in \N(i)$ such that $j_1\in\Omega_{i_1}$.
Repeating the process $n$ times we obtain  $i_n\in \N(i_{n-1})$ such that $j_n\in\Omega_{i_n}$.
But $i\in\N(j_n)\subset\N(\Omega_{i_n})$, then
\begin{equation*}
\nonumber
f(e_{i_n}) \cdot f(e_{i_n})= \sum_{s\in \mathcal{N}(\Omega_{i_{n}})} \beta_{s} \, e_{s} =\beta_{i}e_{i}+ \sum_{s\in \mathcal{N}(\Omega_{i_{n}})\setminus \{i\}} \beta_{s} \, e_{s}.
\end{equation*}
And therefore 
$$
f(e_{i_{n}}^2)=\frac{1}{\deg(i_n)} \,\sum_{\ell\in \mathcal{N}(i_n)} f(e_\ell)=\beta_{i}e_{i}+ \sum_{s\in \mathcal{N}(\Omega_{i_{n}})\setminus \{i\}} \beta_{s} \, e_{s},
$$
which implies that there exist some $\ell\in \N(i_n)$ such that
$i\in\Omega_{\ell}$, i.e.,
\begin{equation*}
f(e_{\ell})=t_{i\ell}e_{i}+\ldots \quad, \text{ with }t_{i\ell}\neq 0.
\end{equation*}
Note that if $i\in\Omega_l$, $i\in\Omega_k$, for $k\neq l$, we must have $t_{il}\neq0$ and $t_{ik}\neq 0$. But this is not possible by \eqref{eq:tij}. Thus the $\Omega_i$'s are a partition of $\mathbb{N}$.


\end{proof}

In the previous result we find some properties of the homomorphisms of Hilbert evolution algebras between $\A_{\RW}(G)$ and $\A(G)$. If we assume that $f$ is an isomorphism we have a stronger result.
\begin{corollary}\label{corol:isomorphism of HEA}
In the conditions of Proposition \ref{prop:homo between HEA} suppose further that $f:\A_{\RW}(G)\longrightarrow \A(G)$ is an isomorphism of Hilbert evolution algebras. Then 
\begin{equation}\label{eq:f isomorphism}
f(e_i) = \alpha_i e_{\pi(i)}
\end{equation}
where  $\alpha_i \neq 0$, for all $i\in \mathbb N$ and $\pi :\mathbb N\rightarrow \mathbb N$ is a bijection.
Moreover, there is $M>0$ such that $|\alpha_i|\leq M$, for all $i\in \mathbb{N}$.
\end{corollary}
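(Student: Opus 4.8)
The plan is to refine the conclusions of Proposition~\ref{prop:homo between HEA} under the additional hypothesis that $f$ is an isomorphism. From Proposition~\ref{prop:homo between HEA}\ref{partition} we already know that the family $\{\Omega_i\}_{i\in\mathbb{N}}$ is a partition of $\mathbb{N}$, so the first task is to upgrade this to the statement that each $\Omega_i$ is a \emph{singleton}. First I would show each $\Omega_i\neq\emptyset$: since $f$ is an isomorphism it is surjective, and if some $\Omega_i=\emptyset$ then $f(e_i)=0$, contradicting injectivity (this also rules out the null-map alternative of Proposition~\ref{prop:homo between HEA}). Next I would argue that no $\Omega_i$ can contain two distinct indices. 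The key point is that the partition $\{\Omega_i\}$ must exhaust $\mathbb{N}$ \emph{and} be indexed by $\mathbb{N}$; if some $\Omega_{i_0}$ had cardinality $\geq 2$, then because the $\Omega_i$ are pairwise disjoint and their union is all of $\mathbb{N}$, the ``remaining'' index set $\mathbb{N}\setminus\Omega_{i_0}$ would have to be covered by $\{\Omega_i\}_{i\neq i_0}$, which still leaves enough room, so a pure counting argument is not quite decisive by itself. Instead I would use surjectivity more directly: for each $r\in\mathbb{N}$ there is $v=\sum_i v_i e_i$ with $f(v)=e_r$; expanding, $\sum_i v_i t_{r i}=1$ and $\sum_i v_i t_{k i}=0$ for $k\neq r$, and since for fixed $r$ there is (by \eqref{eq:tij}) at most one index $i$ with $t_{ri}\neq 0$, call it $\pi^{-1}(r)$, we get that $r\in\Omega_{\pi^{-1}(r)}$ and that this $\pi^{-1}$ is well-defined on all of $\mathbb{N}$; combined with injectivity of $f$ one checks $\pi^{-1}$ is a bijection. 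Setting $\pi:=(\pi^{-1})^{-1}$, we have $\Omega_i=\{\pi(i)\}$, hence $f(e_i)=\alpha_i e_{\pi(i)}$ with $\alpha_i:=t_{\pi(i)\,i}\neq 0$.

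Once the form \eqref{eq:f isomorphism} is established, the uniform bound on $|\alpha_i|$ follows from continuity of $f$: since $f$ is a homomorphism of Hilbert evolution algebras it is bounded, say $\|f(v)\|\leq \|f\|\,\|v\|$ for all $v$, and applying this to $v=e_i$ gives $|\alpha_i|=\|f(e_i)\|\leq \|f\|$ for all $i\in\mathbb{N}$; take $M:=\|f\|$. (One could alternatively observe that a genuinely unbounded family $\alpha_i$ would contradict boundedness of $f$ on the unit vectors $e_i$.)

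I expect the main obstacle to be the rigorous verification that $\pi$ is a \emph{bijection} of $\mathbb{N}$ rather than merely an injection or a surjection — in the finite-dimensional treatment of \cite{PMPT} this is automatic once one has an injective map between sets of equal finite cardinality, but here one must use both the injectivity of $f$ (to get injectivity of $\pi$, via the fact that $f(e_i),f(e_j)$ are scalar multiples of distinct basis vectors when $\Omega_i\cap\Omega_j=\emptyset$) and the surjectivity of $f$ (to get that every $r\in\mathbb{N}$ lies in some $\Omega_i$, i.e.\ surjectivity of $\pi$). A secondary subtlety is making sure the invocation of surjectivity is legitimate: $f^{-1}$ exists and is continuous by hypothesis, so $f$ is in particular onto, and one should note that $f^{-1}(e_r)$ genuinely lies in $\A_{\RW}(G)$ so that the expansion above is valid.
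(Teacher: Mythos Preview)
Your overall strategy is the paper's: use surjectivity of $f$ to force each $\Omega_i$ to be a singleton, then read off $\pi$ and bound the $|\alpha_i|$ by continuity. The continuity argument for $|\alpha_i|\le M$ is fine and matches the paper verbatim. The gap is in the singleton step, where your attribution of hypotheses is inverted.

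You write ``combined with injectivity of $f$ one checks $\pi^{-1}$ is a bijection,'' and in the final paragraph you assign injectivity of $\pi$ to injectivity of $f$ and surjectivity of $\pi$ to surjectivity of $f$. This is backwards at the decisive point. That every $r\in\mathbb{N}$ lies in some $\Omega_i$ is already part of Proposition~\ref{prop:homo between HEA}\ref{partition} and needs no further input; what genuinely remains is that each $\Omega_i$ is a \emph{singleton}, and this is forced by \emph{surjectivity} of $f$ through the very equations you wrote down, not by injectivity of $f$ (injectivity alone does not preclude $|\Omega_{i_0}|\ge 2$, as a shift-type example shows). Concretely, and this is exactly what the paper does: suppose $k,\ell\in\Omega_i$ with $k\neq\ell$, so $t_{ki}\neq 0$ and $t_{\ell i}\neq 0$. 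Take $v$ with $f(v)=e_k$. Your equation $\sum_j v_j t_{kj}=1$, together with \eqref{eq:tij} (which kills $t_{kj}$ for $j\neq i$), gives $v_i t_{ki}=1$, hence $v_i\neq 0$; your equation $\sum_j v_j t_{\ell j}=0$ likewise reduces to $v_i t_{\ell i}=0$, forcing $v_i=0$, a contradiction. Once singletons are established, the partition property alone makes $\pi$ a bijection; injectivity of $f$ is used only to exclude the null map (equivalently, to ensure $\Omega_i\neq\emptyset$).
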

\begin{proof}
We know by the proof of Proposition \ref{prop:homo between HEA} that  if $f$ is given by the expression \eqref{eq:f(ei)} then the scalars $\{t_{ki}\}_{i,k\in\mathbb{N}}$ satisfy \eqref{eq:tij}. Since  $f(e_i)\neq 0$ for all $i\in \mathbb{N}$ we have that $\Omega_{i}\not= \emptyset$. Moreover, we affirm that $\Omega_{i}$  are unitary sets, for all $i$. In this case, the application
$ \pi:\mathbb{N}\longrightarrow \mathbb{N}$ defined by  $\pi(i)=k$, where $k\in \Omega_{i}$ is well defined and, by Proposition \ref{prop:homo between HEA}, $\pi$ is a bijection. As a consequence of this, we have that
\begin{equation}
\nonumber
f(e_i)=\sum_{k=1}^{\infty} t_{ki}e_k= t_{\pi(i)i}e_{\pi(i)}=\alpha_i e_{\pi(i)},
\end{equation}
where we define $\alpha_i:=t_{ki}$. 

Let us see that $\Omega_{j}$ is an unitary set, for all $j\in \mathbb{N}$. Suppose that for some $i\in \mathbb{N}$ there are $k, \ell \in \Omega_{i}$ and $k\neq \ell$.  That means that $t_{ki}\not =0$ and $t_{\ell i}\not =0$. Since  $f$ is  surjective, there exist  $v^k\in \A_{\RW}(G)$ such that $f(v^k)=e_k$. If $v^k=\sum_{j=1}^{\infty}v^k_j e_j$ then
\begin{equation}
\nonumber e_k=f(v^k)=\sum_{j=1}^{\infty} v^k_jf(e_j) =\sum_{m=1}^{\infty}\left (\sum_{j=1}^{\infty}v^k_j t_{kj}\right)e_m.
\end{equation}
Therefore
\begin{equation*}\label{eq:f sobre}
\sum_{j=1}^{\infty} v^k_j t_{mj}=\delta_{mk},\;\;\;\text{ for all }m\in \mathbb{N}.
\end{equation*}
If $m=k$, we have $\sum_{j=1}^{\infty} v^k_j t_{kj}=1$. Since $t_{ki}\neq 0$, by  \eqref{eq:tij},  we have that $t_{kj}=0$ for $j\neq i$. Then $v^k_i t_{ki}=1$, i.e.,
\begin{equation}\label{eq:v^k_i}
v^k_i=t_{ki}^{-1}.
\end{equation}
If $m= \ell$, we have  $\sum_{j=1}^{\infty} v^k_j t_{\ell j}=0$. As before $t_{\ell i}\neq 0$ implies $t_{\ell j}=0$ for $j\neq i$, giving $v^k_it_{\ell i}=0$. That is $v^k_i=0$, which is a contradiction of \eqref{eq:v^k_i}. Therefore $k=\ell$, as desired.



The continuity of $f$ implies that there exist  $M>0$ such that
\begin{equation*}
\|f(e_i)\| =|\alpha_i|\|e_{\pi(i)}\|=|\alpha_i|\leq M, \quad\text{ for all } i\in \mathbb{N}.
\end{equation*}
\end{proof}

\begin{proposition}\label{theo:sufficient}
Let $G$ be a graph. Assume that there exist an Hilbert evolution isomorphism $f:\mathcal{A}(G)\longrightarrow \mathcal{A}_{\RW}(G)$ defined by 
\begin{equation}\label{eq:isofunc}
f(e_i) = \alpha_i e_{\pi(i)},\;\;\;\text{ for all }i\in \mathbb{N},
\end{equation}
where $\alpha_i \neq 0$ is a scalar and  $\pi :\mathbb N\rightarrow \mathbb N$ is a bijection. Then $G$ is a biregular graph or a regular graph. 
\end{proposition}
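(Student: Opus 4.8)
The plan is to exploit the isomorphism equation $f(e_i)=\alpha_i e_{\pi(i)}$ to compare the two multiplication tables and extract a rigidity condition on the degree sequence. First I would compute $f(e_i \cdot e_i)$ in two ways. On one hand, using the product in $\A(G)$ and then applying $f$,
\[
f(e_i^2) = f\!\left(\sum_{k=1}^\infty a_{ki} e_k\right) = \sum_{k=1}^\infty a_{ki}\,\alpha_k\, e_{\pi(k)}.
\]
On the other hand, using that $f$ is an algebra homomorphism and the product in $\A_{\RW}(G)$,
\[
f(e_i)\cdot f(e_i) = \alpha_i^2\, e_{\pi(i)}^2 = \alpha_i^2 \sum_{r=1}^\infty \frac{a_{r\pi(i)}}{\deg(\pi(i))}\, e_r.
\]
Equating coefficients of $e_{\pi(k)}$ (which is legitimate since $\pi$ is a bijection and $\{e_m\}$ is an orthonormal, in particular linearly independent, basis) gives, for every edge $\{i,k\}$ of $G$,
\[
a_{ki}\,\alpha_k = \alpha_i^2\,\frac{a_{\pi(k)\pi(i)}}{\deg(\pi(i))}.
\]

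The next step is to observe that $f$ being an isomorphism forces $\pi$ to be a graph automorphism of $G$, so $a_{\pi(k)\pi(i)} = a_{ki}$ for all $i,k$; this follows because the supports of $e_i^2$ in the two algebras are exactly the neighbourhoods $\N(i)$, and $f$ maps $\N(i)$ bijectively onto $\N(\pi(i))$ via $\pi$ — more carefully, one compares $\Omega$-type supports as in the proof of Proposition \ref{prop:homo between HEA}. Hence for every edge $\{i,k\}$ we get the clean relation
\[
\alpha_k\,\deg(\pi(i)) = \alpha_i^2, \qquad \text{equivalently}\qquad \frac{\alpha_i^2}{\alpha_k} = \deg(\pi(i)).
\]
Swapping the roles of $i$ and $k$ in the same edge yields $\alpha_k^2/\alpha_i = \deg(\pi(k))$. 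Multiplying the two relations gives $\alpha_i \alpha_k = \deg(\pi(i))\deg(\pi(k))$, and dividing them gives $\alpha_i^3/\alpha_k^3 = \deg(\pi(i))^2\deg(\pi(k))/(\deg(\pi(i))\deg(\pi(k))^2)$, i.e. a formula expressing each $\alpha_i$ as a fixed power of $\deg(\pi(i))$ times $\deg(\pi(k))$ along edges. Chasing these identities one concludes that $\alpha_i$ depends only on $\deg(\pi(i))$ and, crucially, that for any edge $\{i,k\}$ the product $\deg(\pi(i))\,\deg(\pi(k))$ is constant along edges incident to a common vertex; combined with connectedness of $G$ this means the degree function takes at most two values, and if it takes two values $d_1 \ne d_2$ then no edge joins two vertices of the same degree — i.e. $G$ is bipartite with parts determined by degree, hence $(d_1,d_2)$-biregular; if it takes one value $G$ is regular. (Here one uses $\deg(\pi(i)) = \deg(i)$ since $\pi$ is an automorphism, so the conclusion is about $G$ itself.)

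The main obstacle I expect is the bookkeeping needed to justify that $\pi$ is a graph automorphism and that the coefficient comparison is valid: one must argue that $f$ carries the ``evolution support'' $\N(i)$ of $e_i$ in $\A(G)$ to the evolution support $\N(\pi(i))$ of $e_{\pi(i)}$ in $\A_{\RW}(G)$, which requires knowing that the structure constants $a_{\cdot i}$ and $a_{\cdot\pi(i)}/\deg(\pi(i))$ have the same zero pattern — true here — and handling the infinite sums with the continuity of $f$ (boundedness of the $\alpha_i$, from Corollary \ref{corol:isomorphism of HEA}, and the fact that $G$ has uniformly bounded degree so each $e_i^2$ is a finite sum). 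Once that is in place, the algebra is the elementary manipulation of the relation $\alpha_i^2 = \alpha_k \deg(\pi(i))$ over edges, exactly paralleling \cite[Theorem 2.3]{PMPT} but with the extra (automatic, since everything is finite-dimensional locally) check that continuity is preserved. I would close by remarking that connectedness of $G$ is used exactly once, to propagate the two-values-only conclusion across the whole vertex set.
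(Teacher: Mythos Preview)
Your proposal is correct and follows the same route as the paper (which defers to \cite[Proposition 2.9]{PMPT}): compare $f(e_i^2)$ with $f(e_i)^2$ to obtain $\alpha_\ell=\alpha_i^2/\deg(i)$ for every $\ell\in\N(i)$, then use the symmetry of this relation along edges to force all neighbours of a fixed vertex to have equal degree, whence regularity or biregularity by connectedness. Two minor remarks: the paper's algebra is slightly more direct (since $\alpha_\ell$ is already constant on $\N(i)$, swapping $i\leftrightarrow\ell$ gives $\deg(\ell_1)=\deg(\ell_2)$ for $\ell_1,\ell_2\in\N(i)$ immediately, without your multiply/divide manoeuvre), and your worry about showing that $\pi$ is a graph automorphism is overblown --- it drops out of the coefficient identity itself, since $\alpha_k\neq 0$ forces $a_{ki}\neq 0 \Leftrightarrow a_{\pi(k)\pi(i)}\neq 0$.
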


\begin{proof} The proof of this result is the same than the one of \cite[Proposition 2.9]{PMPT} so we only sketch it and we refer the reader to \cite{PMPT} for details. If we assume that  $f:\mathcal{A}(G)\longrightarrow \mathcal{A}_{\RW}(G)$ defined by \eqref{eq:isofunc} is an Hilbert evolution isomorphism then, by $f(e_{i}^2)=f(e_i)\cdot f(e_i)$ for any $i\in \mathbb{N}$ we can conclude that $ \deg(i)=\deg(\pi(i))$ for all $i\in \mathbb{N}$. From here, it follows $\alpha_\ell = \alpha_{i}^2/\deg(i)$, for all $\ell \in \mathcal{N}(i)$. After some algebraic manipulations, and appealing to the symmetry of the previous relations (note that if $\ell \in \mathcal{N}(i)$ then  $i \in \mathcal{N}(\ell)$) we get for $\ell_1, \ell_2 \in \mathcal{N}(i)$
$$
\frac{\alpha_{\ell_{1}}^2}{\deg(\ell_{1})} = \alpha_i = \frac{\alpha_{\ell_{2}}^2}{\deg(\ell_{2})}.$$
As a consequence, we obtain the following condition on the degrees in the graph: for any $i\in V,$ if $\ell_1, \ell_2 \in \mathcal{N}(i)$  then $\deg(\ell_1)=\deg(\ell_2)$. This in turns implies that the graph must to be or a regular or a biregular graph, independently of the number of vertices being finite or infinite.
\end{proof}

\subsection{Proof of the Theorem \ref{theo:criterio}}\label{sec:proofs}
The first affirmation is the Proposition \ref{theo:generalization}.
Now, suppose that $G$ is non-singular and $\A_{\RW}(G)\cong \A(G)$, then by Corollary \ref{corol:isomorphism of HEA} we have that the isomorphism $f$ has the form given in the equation \eqref{eq:f isomorphism}. Thus, we are in the conditions of Proposition \ref{theo:sufficient}, i.e., $G$ is regular or biregular, as desired.

\section{Isometric Isomorphisms}
Note that the isomorphism of the Corollary \ref{corol:isomorphism of HEA} is not unitary in general. This is because $\{e_i\}_{i\in\mathbb{N}}$ is a orthonormal natural basis of both $\A(G)$ and $\A_{\RW}(G)$, thus
\begin{equation*}
\langle f(e_i),f(e_j)\rangle=\alpha_i\alpha_j \langle e_{\pi(i)}, e_{\pi(j)}\rangle=\alpha_i\alpha_j\delta_{\pi(i)\pi(j)} 
\end{equation*}
which in general is not equal to $\langle e_i,e_j \rangle=\delta_{ij}$.
If we want an isometric isomorphism, the problem is that we can find an isomorphism of evolution algebras which is not unitary and there are unitary operators between the separable Hilbert spaces $\A(G)$ and $\A_{\RW}(G)$ which are not homomorphisms of evolution algebras. In other words, respect both structures at the same time is too restrictive. But the equation \eqref{eq:f isomorphism} gives us a hint to define a new inner product on $\A_{\RW}(G)$ to obtain an isometric isomorphism, i.e., a homomorphism which is a unitary operator.

Let $f:\A_{\RW}(G)\longrightarrow \A(G)$ be an isomorphism of Hilbert evolution algebras defined by equation \eqref{eq:f isomorphism} with $\alpha_i \neq 0$, $i\in \mathbb N$. With this we can define a new basis on $\A_{\RW}(G)$ given by
\begin{equation*}
\widetilde{e_i}:=\alpha_i^{-1}e_i, \quad\quad\text{ for all } i\in\mathbb{N}.
\end{equation*}
Thus we have and induced inner product on $\A_{\RW}(G)$, turning the basis $\{\widetilde{e_i}\}_{i\in\mathbb{N}}$ into an orthonormal basis; that is
\begin{equation*}
\langle \widetilde{e_i},\widetilde{e_j}\rangle_f:=\delta_{ij}
\end{equation*}
and we define the new Hilbert space $\A^f=(\A,\langle\cdot,\cdot\rangle_f)$ with this structure. Note that in this new product the basis $\{e_i\}_{i\in\mathbb{N}}$ is no longer orthonormal, in fact  
\begin{equation*}
\langle e_i,e_j\rangle_f=\alpha_i\alpha_j\delta_{ij} .
\end{equation*}
This Hilbert space $\A^f$ induces the Hilbert evolution algebra  $\A^f_{\RW}(G)$, where the product is given by
\begin{equation}
\nonumber 
\widetilde{e}_i \cdot \widetilde{e}_i =\displaystyle \sum_{k=1}^{\infty}\frac{a_{ki}}{\deg(i)} \widetilde{e}_k \quad  \text{ and }\quad \widetilde{e}_i \cdot \widetilde{e}_j=0, \text{ if }i\neq j.
\end{equation}
\begin{proposition}\label{prop:induced unitary isomorphism}
Let $f:\A_{\RW}(G)\longrightarrow \A(G)$ be an isomorphism of Hilbert evolution algebras defined by \eqref{eq:f isomorphism}, where $\alpha_i \neq 0$, $i\in \mathbb N$. Then the induced Hilbert evolution algebra $\A^f_{\RW}(G)$ is unitarily isomorphic to $\A(G)$.
\end{proposition}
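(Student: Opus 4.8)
The plan is to show that the map $f$ itself, now viewed as a map $\A^f_{\RW}(G)\longrightarrow \A(G)$, is the desired unitary isomorphism of Hilbert evolution algebras. The point is that on $\A^f_{\RW}(G)$ the inner product $\langle\cdot,\cdot\rangle_f$ has been cooked up precisely so that $f$ becomes isometric, while the underlying algebra product on $\A_{\RW}(G)$ is unchanged, so $f$ remains an algebra homomorphism; since an isometric bijection automatically has continuous inverse, this will give the claim.

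First I would observe that $\A^f_{\RW}(G)$ and $\A_{\RW}(G)$ have the \emph{same} underlying vector space and the \emph{same} bilinear product $\cdot$; only the inner product has changed. In particular $f:\A_{\RW}(G)\longrightarrow\A(G)$ being an algebra homomorphism (indeed an algebra isomorphism) immediately yields that $f:\A^f_{\RW}(G)\longrightarrow\A(G)$ is an algebra isomorphism as well, because algebra homomorphism is a purely algebraic notion independent of the metric structure. What must be checked is that with the new inner product $f$ is a unitary (equivalently, isometric and surjective) operator and that both $f$ and $f^{-1}$ are continuous, so that $f$ qualifies as an isomorphism of Hilbert evolution algebras in the sense of the definition, hence as a unitary isomorphism.

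Next I would verify isometry on the orthonormal basis $\{\widetilde{e_i}\}_{i\in\mathbb N}$ of $\A^f_{\RW}(G)$: by definition $\widetilde{e_i}=\alpha_i^{-1}e_i$, so $f(\widetilde{e_i})=\alpha_i^{-1}f(e_i)=\alpha_i^{-1}\alpha_i e_{\pi(i)}=e_{\pi(i)}$. Thus $f$ carries the orthonormal basis $\{\widetilde{e_i}\}_{i\in\mathbb N}$ of $\A^f_{\RW}(G)$ bijectively onto the orthonormal basis $\{e_{\pi(i)}\}_{i\in\mathbb N}=\{e_k\}_{k\in\mathbb N}$ of $\A(G)$, using that $\pi$ is a bijection. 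A bounded linear map sending an orthonormal basis onto an orthonormal basis is unitary: extending by linearity and continuity, for $v=\sum_i v_i\widetilde{e_i}$ one has $f(v)=\sum_i v_i e_{\pi(i)}$, whence $\|f(v)\|_{\A(G)}^2=\sum_i|v_i|^2=\|v\|_f^2$, and surjectivity is clear since the image contains an orthonormal basis of $\A(G)$ and $f$ is a homeomorphism onto a closed subspace. It then follows that $f^{-1}$ exists and is itself an isometry, in particular continuous, so $f$ is an isomorphism of Hilbert evolution algebras, and being isometric it is a unitary homomorphism; therefore $\A^f_{\RW}(G)\cong_U\A(G)$.

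The only genuinely substantive point, and the step I would treat most carefully, is the bookkeeping around well-definedness: one must confirm that $\{\widetilde{e_i}\}_{i\in\mathbb N}$ really is an orthonormal basis of the Hilbert space $\A^f$ (not merely an orthonormal system), i.e.\ that $\langle\cdot,\cdot\rangle_f$ as declared extends to a genuine inner product with respect to which $\A^f$ is complete and $\{\widetilde{e_i}\}$ spans a dense subspace. This is where the boundedness $|\alpha_i|\le M$ from Corollary \ref{corol:isomorphism of HEA} (and the analogous lower bound, needed if $f$ goes the other way) would be invoked to compare the norm $\|\cdot\|_f$ with the original $\ell^2$ norm and conclude the two Hilbert space structures are compatible; once that is in hand everything else is the routine linear-algebra-over-Hilbert-spaces argument sketched above. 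I do not expect any real obstacle beyond making this identification precise.
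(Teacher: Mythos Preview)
Your argument hinges on the assertion that $\A^f_{\RW}(G)$ and $\A_{\RW}(G)$ carry the \emph{same} bilinear product, so that the algebra-homomorphism property of $f$ transfers for free. That is not how the paper sets up $\A^f_{\RW}(G)$: the product there is \emph{redefined} by declaring $\widetilde{e}_i\cdot\widetilde{e}_i=\sum_k\frac{a_{ki}}{\deg(i)}\,\widetilde{e}_k$ in the new orthonormal natural basis $\widetilde{e}_i=\alpha_i^{-1}e_i$. Rewritten in the original generators this reads $e_i\cdot_f e_i=\sum_k\frac{\alpha_i^{2}}{\alpha_k}\,\frac{a_{ki}}{\deg(i)}\,e_k$, which coincides with the $\A_{\RW}(G)$ product only when $\alpha_k=\alpha_i^{2}$ for every $k\sim i$; for the isomorphism of Proposition~\ref{theo:generalization} on a $d$-regular graph ($\alpha_i\equiv d$, $\pi=\mathrm{id}$) this already fails once $d\geq 2$. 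Hence the step ``$f$ is an algebra homomorphism $\A_{\RW}(G)\to\A(G)$, therefore also $\A^f_{\RW}(G)\to\A(G)$'' is not valid as written, and the gap sits precisely at the point you flagged as routine.

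The paper proceeds differently: it introduces the auxiliary map $\varphi:\A^f_{\RW}(G)\to\A_{\RW}(G)$, $\varphi(\widetilde{e}_i):=e_i$, which is unitary (it sends one orthonormal basis to another) and is an evolution-algebra isomorphism because both algebras carry the \emph{same} structure constants with respect to their respective orthonormal natural bases. The candidate map is then the composition $\widetilde f:=f\circ\varphi$, whose linearity, bijectivity, bicontinuity and multiplicativity are inherited from the factors; unitarity is then checked directly on the bases. Your isometry verification via $\widetilde{e}_i\mapsto e_{\pi(i)}$ matches the formula the paper records for $\widetilde f$; what your write-up is missing is the passage through $\varphi$, which is exactly what the paper uses to supply the algebra-homomorphism step you tried to obtain by asserting that the products agree.
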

\begin{proof}
We want to prove that there exist an unitary Hilbert algebra isomorphism
\begin{equation*}
\widetilde{f}:\A^f_{\RW}(G)\longrightarrow \A(G)
\end{equation*}
Define the unitary isomorphism $\varphi:\A^f_{\RW}(G)\longrightarrow\A_{\RW}(G)$ taking orthonormal basis into orthonormal basis, i.e.,
\begin{equation*}
\varphi(\widetilde{e}_i):=e_i.
\end{equation*}
Then we can define the new isomorphism by
\begin{equation*}
\widetilde{f}:=f\circ\varphi
\end{equation*}
that is $\widetilde{f}(\widetilde{e}_i)=e_{\pi(i)}$ for all $i\in\mathbb{N}$.
This $\widetilde{f}$ is linear, bijective, continuous and with continuous inverse, because is composition of maps with these properties.
It remains to prove that $\widetilde{f}$ is unitary, but
\begin{equation*}
\langle \widetilde{f}(\widetilde{e}_i),\widetilde{f}(\widetilde{e}_j)\rangle= \langle e_{\pi(i)},e_{\pi(j)}\rangle =\delta_{\pi(i)\pi(j)} =\delta_{ij}.
\end{equation*}
Thus $\widetilde{f}$ takes orthonormal basis into orthonormal basis, which, by linearity, implies
\begin{equation*}
\langle \widetilde{f}(v),\widetilde{f}(w)\rangle= \langle v,w\rangle_f,
\end{equation*}
for all $v,w\in \A^f_{\RW}(G)$, as desired.
\end{proof}

Now, we can formulate a similar result to Theorem \ref{theo:criterio}.
\begin{theorem}\label{theo:criterio unitario}
Let $G$ be a graph with uniformly bounded degree. If $G$ is a regular or a biregular graph, then $\A^f_{\RW}(G)\cong_U \A(G)$, for some $f$ satisfying \eqref{eq:f isomorphism}. Moreover, if $G$ is non-singular and $\A^f_{\RW}(G)\cong_U \A(G)$ for some $f$ satisfying \eqref{eq:f isomorphism}, then $G$ is a regular or a biregular graph. 
\end{theorem}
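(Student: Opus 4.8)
The plan is to deduce both halves of the statement from results already in hand. For the direct implication I would combine Proposition \ref{theo:generalization} with Proposition \ref{prop:induced unitary isomorphism}; for the converse I would appeal to Proposition \ref{theo:sufficient}.

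For the direct part, suppose $G$ is regular or biregular. Proposition \ref{theo:generalization} furnishes an isomorphism of Hilbert evolution algebras $g:\A(G)\longrightarrow\A_{\RW}(G)$, and an inspection of its construction shows that $g$ is diagonal in the natural basis, $g(e_i)=c_i\,e_i$ with $c_i\neq 0$ (explicitly $c_i\in\{(d_1^2d_2)^{1/3},(d_1d_2^2)^{1/3}\}$ in the $(d_1,d_2)$-biregular case and $c_i=d$ in the $d$-regular case). Hence $f:=g^{-1}:\A_{\RW}(G)\longrightarrow\A(G)$ is again an isomorphism of Hilbert evolution algebras, and it satisfies \eqref{eq:f isomorphism} with $\alpha_i:=c_i^{-1}\neq 0$ and $\pi:=\mathrm{id}$. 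Applying Proposition \ref{prop:induced unitary isomorphism} to this particular $f$ then yields $\A^f_{\RW}(G)\cong_U\A(G)$, which is the first assertion.

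For the converse, I would note that the hypothesis ``$\A^f_{\RW}(G)\cong_U\A(G)$ for some $f$ satisfying \eqref{eq:f isomorphism}'' already supplies, by the construction preceding Proposition \ref{prop:induced unitary isomorphism}, an isomorphism of Hilbert evolution algebras $f:\A_{\RW}(G)\longrightarrow\A(G)$ of the form $f(e_i)=\alpha_i e_{\pi(i)}$ with $\alpha_i\neq 0$ and $\pi$ a bijection. Its inverse $f^{-1}:\A(G)\longrightarrow\A_{\RW}(G)$ is then also an isomorphism of Hilbert evolution algebras and has the same diagonal-plus-permutation form, namely $f^{-1}(e_j)=\alpha_{\pi^{-1}(j)}^{-1}\,e_{\pi^{-1}(j)}$, so it satisfies \eqref{eq:isofunc}. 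Proposition \ref{theo:sufficient} applied to $f^{-1}$ then forces $G$ to be regular or biregular, which is the converse.

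No single step is technically demanding; the only real obstacle is conceptual bookkeeping --- choosing, in the direct part, the specific $f$ (the inverse of the explicit map of Proposition \ref{theo:generalization}) to which Proposition \ref{prop:induced unitary isomorphism} applies, and verifying that the shape \eqref{eq:f isomorphism}/\eqref{eq:isofunc} is preserved under passing to inverses. Conceptually, the theorem is a repackaging of Theorem \ref{theo:criterio}: Proposition \ref{prop:induced unitary isomorphism} shows that any isomorphism of the form \eqref{eq:f isomorphism} becomes unitary once the inner product of $\A_{\RW}(G)$ is renormalized through the scalars $\alpha_i$, so the class of graphs characterized here coincides with the one in Theorem \ref{theo:criterio}. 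As there, the non-singularity assumption is used only for the converse.
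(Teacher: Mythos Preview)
Your proof is correct and follows the same approach as the paper's (Proposition \ref{theo:generalization} plus Proposition \ref{prop:induced unitary isomorphism} for the forward direction, Proposition \ref{theo:sufficient} for the converse), with the added care of explicitly passing to inverses so that the maps match the directions required by \eqref{eq:f isomorphism} and \eqref{eq:isofunc}. One small correction to your closing remark: neither your argument nor the paper's actually invokes the non-singularity hypothesis in the converse here, since Proposition \ref{theo:sufficient} does not require it---so, unlike in Theorem \ref{theo:criterio}, that assumption is in fact superfluous once $f$ of the form \eqref{eq:f isomorphism} is given.
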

\begin{proof}
If $G$ is a regular or a biregular graph, then by Proposition \ref{theo:generalization} we know that there exist an isomorphism of the form
\begin{equation*}
f(e_i) = \alpha_i e_{i},\;\;\;\text{ for all }i\in \mathbb{N},
\end{equation*}
where $\alpha_i\neq 0$. Thus, we are in the conditions of Proposition \ref{prop:induced unitary isomorphism}, that is, $\A^f_{\RW}(G)$ is unitarily isomorphic to $\A(G)$. On ther hand, suppose that $G$ is non-singular and $\A^f_{\RW}(G)$ is unitarily isomorphic to $\A(G)$, for some $f$ satisfying \eqref{eq:f isomorphism}. In particular, we are in the conditions of Proposition \ref{theo:sufficient}. It follows that $G$ is a biregular graph or a regular graph. 
\end{proof}

\section{Discussion}

\subsection{Homomorphisms that are not isomorphisms}
In the finite case it was proved \cite[Proposition 2.14]{PMPT} that, if $G$ is a non-singular graph and $\A_{\RW}(G)$ and $\A(G)$ are not isomorphic, the only homomorphism of evolution algebras is the null map. 
In our case we prove that, for non-singular graphs, the homomorphisms are always injective (see Proposition \ref{prop:homo between HEA}), but it was not possible to us, to prove that they are surjective (in the finite case this is a direct consequence of injectivity). Thus, this remains as an open question, that is: there exist an injective but not surjective homomorphism between $\A_{\RW}(G)$ and $\A(G)$ for a non-singular graph?

On the other hand, it is worth pointing out that for some singular graphs we can find homomorphisms different of the isomorphisms identified in Corollary \ref{corol:isomorphism of HEA}. Let us explain this statement by considering the twin partition of a given graph. Let us start with some additional definitions from Graph Theory. We say that vertices $i$ and $j$ of $G$ are twins if they have exactly the same set of neighbors, i.e. $\mathcal{N}(i)=\mathcal{N}(j)$. We notice that by defining the relation $\sim_{t}$ on the set of vertices $V$ by $i\sim_{t} j$ whether $i$ and $j$ are twins, then $\sim_{t}$ is an equivalence relation. An equivalence class of the twin relation is referred to as a {\it twin class}. In other words, the twin class of a vertex $i$ is the set $\{j\in V:i \sim_{t} j\}$. The set of all twin classes of $G$ is denoted by $\Pi(G)$ and it is referred to as the {\it twin partition} of $G$. A graph is {\it twin-free} if it has no twins. 
We denote by $G/\Pi$ the quotient graph of $G$ with respect to $\Pi$; which is the graph obtained by merging each class of $\Pi$ into a single vertex. In this case, two vertices in $G/\Pi$ are neighbors if and only if their respective twin classes in $G$ are neighbors in the sense that any vertex of one of the classes is a neighbor of any vertex of the other one. Notice that this operation on the graph may be seen as the one called twin reduction in literature, see for instance \cite[Section 3.3]{lovasz/2012}. It is not difficult to see that this construction leads to a uniquely determined twin-free graph.

\begin{proposition}\label{prop:twin construction}
Let $G$ be a graph with uniformly bounded degree. Let $\Pi(G)$ be the twin partition of $G$ and let $G/\Pi$ be the quotient graph of $G$ with respect to $\Pi$. 
If $G/\Pi$ is a regular or a biregular graph, then there exists a non-injective, surjective homomorphism between $\A_{\RW}(G)$ and $\A(G)$.
\end{proposition}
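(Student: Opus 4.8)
The plan is to build the homomorphism by factoring it through the twin quotient $G/\Pi$. The basic observation is that if $i$ and $j$ are twins then $\mathcal{N}(i)=\mathcal{N}(j)$ and hence also $\deg(i)=\deg(j)$, so in \emph{both} $\A(G)$ and $\A_{\RW}(G)$ one has $e_i^2=e_j^2$: every twin class carries a built-in degeneracy. Moreover $G/\Pi$ is a twin-free graph with uniformly bounded degree (since $\deg_{G/\Pi}(\alpha)\le\deg_G(i)\le M$ for $i$ in the twin class $T_\alpha$), so the whole machinery of Section~\ref{sec:Hilbert evolution algebras associated to a graph} applies to it. As $G/\Pi$ is regular or biregular, Proposition~\ref{theo:generalization} gives an isomorphism of Hilbert evolution algebras $\phi\colon\A_{\RW}(G/\Pi)\to\A(G/\Pi)$, and the proof of that proposition shows that $\phi$ may be taken diagonal, $\phi(\bar e_\alpha)=\mu_\alpha\bar e_\alpha$, with the $\mu_\alpha$ bounded and bounded away from $0$.

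Next I would connect the algebras of $G$ with those of $G/\Pi$ via a change of basis adapted to the partition. Inside each twin class $T_\alpha$ replace $\{e_k\}_{k\in T_\alpha}$ by the orthonormal basis $\{\sigma_\alpha\}\cup\{\tau_\alpha^{(t)}\}_{t\ge 2}$, where $\sigma_\alpha:=|T_\alpha|^{-1/2}\sum_{k\in T_\alpha}e_k$ and the $\tau_\alpha^{(t)}$ span its orthogonal complement. Using that $e_k^2$ does not depend on $k\in T_\alpha$ and that $e_ke_{k'}=0$ one checks the identities
\[
\sigma_\alpha^2=(\tau_\alpha^{(t)})^2 = \sum_{\beta\sim\alpha} c_{\alpha\beta}\,\sigma_\beta,\qquad
\sigma_\alpha\sigma_\beta=\sigma_\alpha\tau_\beta^{(t)}=\tau_\alpha^{(t)}\tau_\beta^{(t')}=0 \quad(\alpha\ne\beta\ \text{or}\ t\ne t'),
\]
valid in $\A_{\RW}(G)$ with the degree weights and, \emph{mutatis mutandis}, in $\A(G)$ without them. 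Thus $\overline{\mathrm{span}}\{\sigma_\alpha\}$ is, after rescaling, a copy of $\A_{\RW}(G/\Pi)$ sitting inside $\A_{\RW}(G)$ (and similarly for $\A(G)$), with the $\tau$-directions sitting ``on top'' of it. One then defines a surjective homomorphism $\pi\colon\A_{\RW}(G)\to\A_{\RW}(G/\Pi)$ rescaling $\sigma_\alpha\mapsto\bar e_\alpha$ and sending each $\tau_\alpha^{(t)}$ to an appropriate scalar multiple of $\bar e_\alpha$, composes $\phi\circ\pi$ with a map back into $\A(G)$ built from the $\hat\sigma_\alpha,\hat\tau_\alpha^{(t)}$, and takes $\Phi$ to be the resulting composite $\A_{\RW}(G)\to\A(G)$; continuity of each factor is automatic from uniform boundedness of the degrees, as in Propositions~\ref{corol:manageable condition} and~\ref{prop:EOcont}.

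The hard part is to tune all the scalars so that $\Phi$ is simultaneously an algebra homomorphism, non-injective, and surjective. The homomorphism identities $\Phi(e_ie_j)=\Phi(e_i)\Phi(e_j)$ and $\Phi(e_i^2)=\Phi(e_i)^2$ reduce, after the change of basis, to a compatible system of relations among the scalars indexed by the vertices and edges of $G/\Pi$, and regularity or biregularity of $G/\Pi$ is precisely the condition (cf. Proposition~\ref{theo:generalization}) under which this system admits a nonzero bounded solution. Non-injectivity is then immediate as soon as some $|T_\alpha|\ge 2$: the $\tau$-directions of that class get collapsed, so $\ker\Phi\ne\{0\}$. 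Surjectivity is the delicate point, and this is where the infinite-dimensionality of $\A(G)$ is needed: collapsing the $\tau$-directions outright would leave the $\hat\tau$-directions of $\A(G)$ outside the image, so the construction has to re-route them, matching the $\tau$'s of $\A_{\RW}(G)$ onto the $\hat\tau$'s of $\A(G)$ through a shift-type reindexing of the twin classes while still dropping one direction. I expect that bookkeeping to be the main technical obstacle; the remainder is the routine verification that each ingredient is a continuous algebra map and that the whole construction degenerates to the isomorphism of Theorem~\ref{theo:criterio} when $G$ is already twin-free.
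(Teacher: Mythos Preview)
Your route is far more elaborate than the paper's. The paper simply picks one representative $i_k$ from each twin class, forms the closed subspace $\A^\Pi=\overline{\mathrm{span}}\{e_{i_k}\}$, invokes Theorem~\ref{theo:criterio} on $G/\Pi$ to get an isomorphism $f\colon\A^\Pi_{\RW}(G/\Pi)\to\A^\Pi(G/\Pi)$, and extends it to all of $\A_{\RW}(G)$ by the single rule $f(e_i):=f(e_{i_k})$ whenever $i\sim_t i_k$. Non-injectivity is then immediate. Your $\sigma/\tau$ decomposition is a dressed-up version of the same construction: sending all the $\tau$'s to zero is exactly sending every twin to the image of its representative, and the map $\sigma_\alpha\mapsto\bar e_\alpha$ is the paper's identification of $\A^\Pi$ with the quotient algebra. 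So the first half of your plan is correct but over-engineered.

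Where you diverge from the paper is surjectivity, and here you have put your finger on a genuine issue. You are right that any map built by collapsing the $\tau$-directions has image contained in $\A^\Pi$, which is a \emph{proper} closed subspace of $\A(G)$ as soon as some twin class has size at least two; such a map cannot be onto $\A(G)$. The paper's proof asserts surjectivity on the grounds that $f\colon\A^\Pi_{\RW}(G/\Pi)\to\A^\Pi(G/\Pi)$ is surjective, but that only yields surjectivity onto $\A^\Pi$, not onto $\A(G)$. Your proposed fix---a ``shift-type reindexing'' that reroutes the $\tau$'s of $\A_{\RW}(G)$ onto the $\hat\tau$'s of $\A(G)$---is not carried out, and it is not clear it can be: since $(\hat\tau_\alpha^{(t)})^2$ lands in the $\hat\sigma$-span, any preimage must have its square mapping there too, and tracking the homomorphism identities through a shift quickly runs into the same rigidity that forces Corollary~\ref{corol:isomorphism of HEA}. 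In short, you have correctly identified the delicate point and seen further than the paper's argument on it, but your sketch does not close the gap either.
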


\begin{proof}
Let $G$ be a graph with uniformly bounded degree, and let $\A$ be a separable Hilbert space with an orthonormal basis $\{e_i\}_{i\in\mathbb{N}}$. Consider the Hilbert evolution algebras $\A(G)$ and $\A_{\RW}(G)$ defined, respectively, by 
$$
e_i \cdot e_i =\displaystyle \sum_{k=1}^{\infty} a_{ki} e_k \quad \text{ and }\quad e_i \cdot e_j=0, \text{ if }i\neq j,
$$
see Proposition \ref{prop:HEA of uniformily bounded degree}, and by 
$$
e_i \cdot e_i =\displaystyle \sum_{k=1}^{\infty} \frac{a_{ki}}{\deg(i)} e_k \quad  \text{ and }\quad e_i \cdot e_j=0, \text{ if }i\neq j,
$$
see Proposition \ref{prop:SRWhea}. Now, let $\Pi(G)=\{[i_k]\}_{k=1}^{|\Pi(G)|}$ be the twin partition of $G$, where $[i_k]$ denotes the twin class of vertex $i_k$. This induces the closed linear span given by 
\begin{equation*}
\A^\Pi:=\overline{\mathrm{span}}\{e_{i_k}:k=1,\ldots,|\Pi(G)|\}
\end{equation*}
which is a closed linear subspace of $\A$. Now consider the quotient graph of $G$ with respect to $\Pi$, $G/\Pi$ and note that this graph is of uniformly bounded degree. Thus, we have the induced Hilbert evolution algebras $\A^\Pi(G/\Pi)$, $\A^\Pi_{\RW}(G/\Pi)$. If the quotient graph $G/\Pi$ is a regular or a biregular graph, then Theorem \ref{theo:criterio} guarantees the existence of an isomorphism $f:\A^\Pi_{\RW}(G/\Pi)\longrightarrow \A^\Pi(G/\Pi)$. This isomorphism allow us to construct an homomorphism denoted by the same letter, $f:\A_{\RW}(G)\longrightarrow \A(G)$ and defined by
\begin{equation*}
f(e_i):=f(e_{i_k}) \quad,\quad\text{for all } i\in\mathbb{N}\,,\, i \sim_{t} i_k.
\end{equation*}
Note that $f$ is not injective because two different generators with twin indices have the same image. And $f:\A_{\RW}(G)\longrightarrow \A(G)$ is surjective because $f:\A^\Pi_{\RW}(G/\Pi)\longrightarrow \A^\Pi(G/\Pi)$ is surjective.
\end{proof}
Thus, the Proposition \ref{prop:twin construction} gives examples of singular graphs where we have a non-injective, surjective homomorphism between $\A_{\RW}(G)$ and $\A(G)$.

\subsection{On the condition of the uniform boundedness of the graph}

Since we are dealing with graphs with infinitely many vertices we appeal to the self-adjointness of the adjacency operator, defined by \eqref{eq:adjacency operator02}, to guarantee that it is well defined into the entire space $\ell^2(\mathbb{N})$. Thus Theorem \ref{theo:adjacency oper bounded} lead us to restrict our attention to the wide class of graphs with uniformly bounded degree. Moreover, Proposition \ref{prop:HEA of uniformily bounded degree} implies that only for these graphs we can define a Hilbert evolution algebra. In order to find examples of graphs with, and without, uniformly bounded degree one can appeal to the spherically symmetric trees. A rooted tree $T$, with root ${\bf 0}$, is called spherically symmetric if for any vertex $v$ its degree $\deg(v)$ depends only on $\dist({\bf 0},v)$. See \cite[Chapter 3, Section 2]{lyons} for some properties. The $2$ periodic tree with degrees $2$ and $3$, $\mathbb{T}_{2,3}$, is an example of infinite graph with uniformly bounded degree, see Figure \ref{fig:tree}(a). Then Proposition \ref{prop:HEA of uniformily bounded degree} guarantees the existence of a well defined Hilbert evolution algebra $\A(\mathbb{T}_{2,3})$. On the other hand, we cannot claim the same for the factorial tree $\mathbb{T}_{!}$, which is an example of infinite graph which is locally finite but it has not uniformly  bounded degree. Indeed, $\mathbb{T}_{!}$ is the tree such that any vertex a distance $n$ from the root has degree $n+2$ so the number of vertices at distance $n$ from the root is $n!$, see Figure \ref{fig:tree}(b).

\begin{figure}
    \centering
    \subfigure[][The $2$ periodic tree with degrees $2$ and $3$, $\mathbb{T}_{2,3}$.]{

\begin{tikzpicture}[scale=0.8]
\node at (0,-4.8) {};
\draw (-1.5,0)--(1.5,0);
\draw (-1.5,0)--(-3.5,2);
\draw (-1.5,0)--(-3.5,-2);
\draw (1.5,0)--(3.5,2);
\draw (1.5,0)--(3.5,-2);

\draw [dotted] (3.5,2)--(3.5,3);
\draw [dotted] (3.5,2)--(4.5,2);
\draw [dotted] (3.5,-2)--(3.5,-3);
\draw [dotted] (3.5,-2)--(4.5,-2);
\draw [dotted] (-3.5,2)--(-3.5,3);
\draw [dotted] (-3.5,2)--(-4.5,2);
\draw [dotted] (-3.5,-2)--(-3.5,-3);
\draw [dotted] (-3.5,-2)--(-4.5,-2);

\filldraw [black] (0,0) circle (1.5pt);
\filldraw [black] (1.5,0) circle (1.5pt);
\filldraw [black] (-1.5,0) circle (1.5pt);
\filldraw [black] (2.5,1) circle (1.5pt);
\filldraw [black] (2.5,-1) circle (1.5pt);
\filldraw [black] (-2.5,1) circle (1.5pt);
\filldraw [black] (-2.5,-1) circle (1.5pt);

\filldraw [black] (3.5,2) circle (1.5pt);
\filldraw [black] (3.5,-2) circle (1.5pt);
\filldraw [black] (-3.5,2) circle (1.5pt);
\filldraw [black] (-3.5,-2) circle (1.5pt);

\filldraw [white] (0,-4.2) circle (1.5pt);

\end{tikzpicture}
}\qquad\subfigure[][The factorial tree $\mathbb{T}_{!}$]{

\tikzstyle{level 1}=[sibling angle=120]
\tikzstyle{level 2}=[sibling angle=130]
\tikzstyle{level 3}=[sibling angle=60]
\tikzstyle{level 4}=[sibling angle=30]

\tikzstyle{edge from parent}=[segment length=0.8cm,segment angle=10,draw]
\begin{tikzpicture}[scale=0.8,rotate=0,grow cyclic,shape=circle,minimum size = 1.5pt,inner sep=1pt,level distance=15mm,
                    cap=round]
\node[fill] {} child [\A] foreach \A in {black}
    { node[fill] {} child [color=\A!50!\B] foreach \B in {black,black}
        { node[fill] {} child [color=\A!50!\B!50!\C] foreach \C in {black,black,black}
            { node[fill] {} child [color=\A!50!\B!50!\C!50!\D] foreach \D in {black,black,black,black}
            { node[fill] {} }
        }
    }
    };
    
\node at (-0.4,-0.2) {\tiny ${\bf 0}$};
\node at (0,6) {};

\draw[dotted] (5.5,1.7) -- (5.9,1.77);
\draw[dotted] (5.5,-1.7) -- (5.9,-1.77);
\draw[dotted] (0.3,4) -- (0,4.4);
\draw[dotted] (0.3,-4) -- (0,-4.4);
\draw[dotted] (3.5,4.3) -- (3.7,4.8);
\draw[dotted] (3.5,-4.3) -- (3.7,-4.8);
\end{tikzpicture}

}
\caption{Example of infinite graphs with uniformly bounded degree (a), and without uniformly bounded degree (b).}\label{fig:tree}
\end{figure}
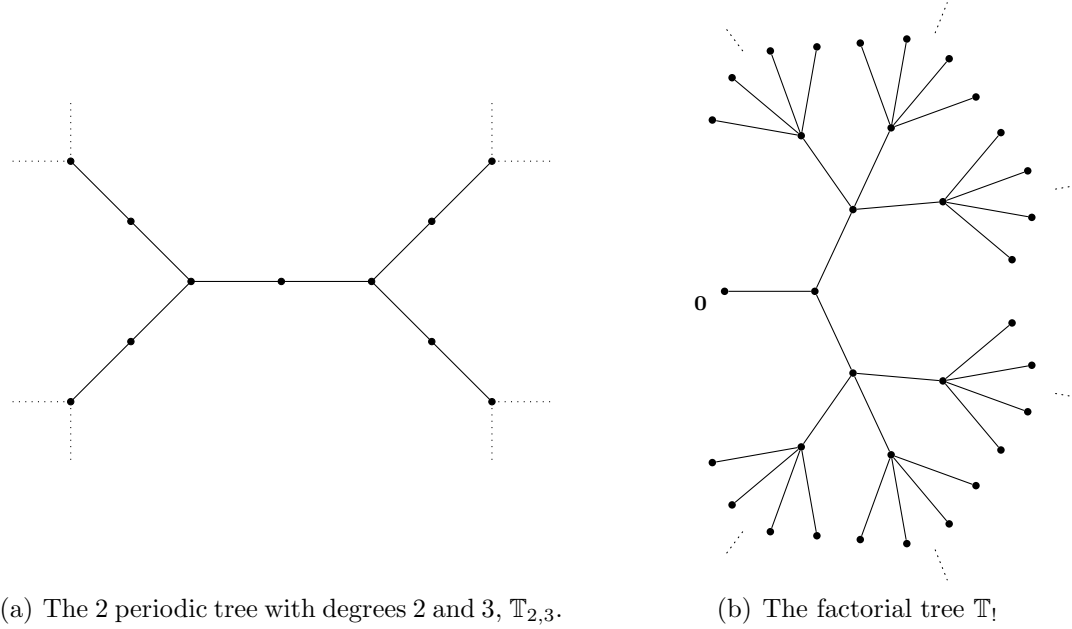

A direction of further research could be extend our framework to deal with graphs which are not of uniformly bounded degree. 
Then, we are interested in extend the theory to infinite graphs which are locally finite but not necessarily of uniformly bounded degree. Let $G$ be a locally finite graph, $\A$ a separable Hilbert space with an orthonormal basis $\{e_i\}_{i\in\mathbb{N}}$. We consider a different induced Hilbert space 
\begin{equation*}
\A_d=\biggl\{v=\sum_{i=1}^\infty v_ie_i\,:\, \sum_{i=1}^\infty |v_i|^2\deg(i)<\infty\biggr\},
\end{equation*}
where we define the weighted inner product,
\begin{equation*}
\langle v,w\rangle_d:=\sum_{i=1}^\infty v_i\overline{w_i}\deg(i),
\end{equation*}
for $v,w\in\A_d$, with corresponding norm $\|v\|_d:=(\sum_{i=1}^\infty |v_i|^2\deg(i))^{1/2}$. In this space the basis $\{e_i\}_{i\in\mathbb{N}}$ is no longer orthonormal, instead, we have $\|e_i\|_d^2=\deg(i)$. With this we have a result similar to Proposition \ref{prop:HEA of uniformily bounded degree}.

\begin{proposition}\label{prop:HEA of locally bounded degree}
Let $G$ be a graph with adjacency operator $A:D(A)\longrightarrow \ell^2(\mathbb{N})$ given by equation \eqref{eq:adjacency operator02} and $\A$ be a separable Hilbert space with an orthonormal basis $\{e_i\}_{i\in\mathbb{N}}$. If $G$ is a locally finite graph, the Hilbert space $\A_d$ induces a Hilbert evolution algebra $\A_d(G)$ defined by 
\begin{equation*}
e_i \cdot e_i =\displaystyle \sum_{k=1}^{\infty} \frac{a_{ki}}{\deg(k)^{1/2}} e_k \text{ and } e_i \cdot e_j=0, \text{ if }i\neq j,
\end{equation*}
\end{proposition}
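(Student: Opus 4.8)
The plan is to verify the two compatibility conditions in Definition \ref{def:evolalg} for the algebra $\A_d(G)$ in the weighted Hilbert space $\A_d$, which has orthonormal basis $\{\widetilde{e}_i\}_{i\in\mathbb N}$ where $\widetilde{e}_i:=\deg(i)^{-1/2}e_i$. First I would rewrite the prescribed product in terms of this honest orthonormal basis: from $e_i\cdot e_i=\sum_{k}a_{ki}\deg(k)^{-1/2}e_k$ and $\|e_i\|_d^2=\deg(i)$, one computes the structural constants relative to $\{\widetilde{e}_i\}$ and checks that they are well defined, i.e.\ that $e_i\cdot e_i\in\A_d$. The key point is that the sum $\sum_k |a_{ki}\deg(k)^{-1/2}|^2\,\deg(k)=\sum_k a_{ki}=\deg(i)<\infty$ by local finiteness, so each $e_i\cdot e_i$ genuinely lies in $\A_d$ even without uniform boundedness — this is precisely where the weight $\deg(k)^{-1/2}$ does its work.

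Next I would extend the product bilinearly and check the continuity of left multiplications $L_v$. For $v=\sum_i v_i e_i\in\A_d$ and $w=\sum_i w_i e_i\in\A_d$ one has $v\cdot w=\sum_k\bigl(\sum_i v_iw_i a_{ki}\deg(k)^{-1/2}\bigr)e_k$, so
\begin{equation*}
\|v\cdot w\|_d^2=\sum_{k=1}^\infty\Bigl|\sum_{i=1}^\infty v_iw_i a_{ki}\deg(k)^{-1/2}\Bigr|^2\deg(k)
=\sum_{k=1}^\infty\Bigl|\sum_{i\in\N(k)} v_iw_i\Bigr|^2.
\end{equation*}
Since only the indices $i\in\N(k)$ contribute and $\deg(k)<\infty$, a Cauchy--Schwarz estimate gives $\bigl|\sum_{i\in\N(k)}v_iw_i\bigr|^2\le \bigl(\sum_{i\in\N(k)}|v_i|^2\bigr)\bigl(\sum_{i\in\N(k)}|w_i|^2\bigr)$, and after summing over $k$ one bounds this by $\|v\|_\infty$-type quantities times $\|w\|_d^2$, or more cleanly one fixes $v$ with finitely many nonzero coordinates first and then argues by density. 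The cleanest route is: for fixed $v$, the operator $L_v$ is determined on the orthonormal basis $\{\widetilde{e}_i\}$, and its matrix entries can be read off and shown to satisfy the $\ell^2$-summability hypothesis of Proposition \ref{corol:manageable condition} (applied in the space $\A_d$ with basis $\{\widetilde{e}_i\}$), which immediately yields that $\A_d(G)$ is a Hilbert evolution algebra.

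The main obstacle is bookkeeping: the basis $\{e_i\}$ is \emph{not} orthonormal in $\A_d$, so one must translate everything into $\{\widetilde{e}_i\}$ before invoking Proposition \ref{corol:manageable condition} or Definition \ref{def:evolalg}, and keep careful track of the weights $\deg(\cdot)^{\pm 1/2}$ through the computations. Concretely, writing $\widetilde{c}_{ki}$ for the structural constants of the product with respect to $\{\widetilde{e}_i\}$, I expect $\widetilde{c}_{ki}$ to simplify to an expression like $a_{ki}\,\deg(i)^{1/2}\deg(k)^{-1}$ (up to a routine check of exponents), and then $\sup_i\sum_k|\widetilde{c}_{ki}|^2=\sup_i \deg(i)\sum_k a_{ki}\deg(k)^{-2}\le\sup_i\deg(i)\cdot\sum_{k}a_{ki}=\sup_i\deg(i)^2$, which is \emph{not} finite in general — so Proposition \ref{corol:manageable condition} cannot be applied blindly, and one should instead verify condition (ii) of Definition \ref{def:evolalg} directly for each fixed $v$, exploiting that a fixed $v\in\A_d$ together with local finiteness makes each inner sum $\sum_{i\in\N(k)}v_i w_i$ finite and controllable. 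I would present the direct verification of continuity of $L_v$ as the core of the argument, with the well-definedness of the product (membership of $e_i\cdot e_i$ in $\A_d$) as the easy preliminary step.
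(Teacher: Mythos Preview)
Your approach is the same as the paper's: verify condition (ii) of Definition \ref{def:evolalg} directly by estimating $\|L_v w\|_d$ via Cauchy--Schwarz. You carry this out correctly up to the line
\[
\|v\cdot w\|_d^2=\sum_{k=1}^{\infty}\Bigl|\sum_{i\in\N(k)}v_iw_i\Bigr|^2,
\]
but then you lose the thread and start proposing density arguments, $\|v\|_\infty$-bounds, and a (correctly abandoned) appeal to Proposition \ref{corol:manageable condition}. None of that is needed: you are one line from the end. After your Cauchy--Schwarz step $\bigl|\sum_{i\in\N(k)}v_iw_i\bigr|^2\le\bigl(\sum_{i\in\N(k)}|v_i|^2\bigr)\bigl(\sum_{i\in\N(k)}|w_i|^2\bigr)$, simply bound the first factor by the global sum $\sum_{i}|v_i|^2=\|v\|^2$, pull it outside the $k$-sum, and exchange the order of summation in what remains:
\[
\sum_{k=1}^{\infty}\sum_{i\in\N(k)}|w_i|^2
=\sum_{i=1}^{\infty}|w_i|^2\,\#\{k:i\in\N(k)\}
=\sum_{i=1}^{\infty}|w_i|^2\deg(i)=\|w\|_d^2.
\]
This yields $\|L_v w\|_d\le \|v\|\,\|w\|_d$, exactly the bound the paper obtains. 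Note that the constant is the \emph{unweighted} norm $\|v\|$, which is finite for $v\in\A_d$ since $\deg(i)\ge 1$ on a connected graph; this is what allows the argument to go through without uniform boundedness of degrees.

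Your preliminary observation that $e_i\cdot e_i\in\A_d$ because $\sum_k a_{ki}\deg(k)^{-1}\deg(k)=\deg(i)<\infty$ is correct and worth keeping. The translation to the orthonormal basis $\{\widetilde{e}_i\}$ and the computation of $\widetilde{c}_{ki}$ can be dropped entirely: the paper never invokes Proposition \ref{corol:manageable condition} here, and as you yourself discovered, that route does not close.
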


\begin{proof}
Let $v\in\A_d(G)$. We must prove the continuity of the operator $L_v:\A_d(G)\longrightarrow\A_d(G)$. Write $v=\sum_{i=1}^\infty v_ie_i$, $w=\sum_{i=1}^\infty w_ie_i$, then
\begin{equation*}
\|L_v w\|^2_d=\sum_{k=1}^\infty\biggl|\sum_{i=1}^\infty v_iw_i \frac{a_{ki}}{\deg(k)^{1/2}}\biggr|^2\deg(k).
\end{equation*}
Hence
\begin{equation*}
\begin{array}{rl}
\|L_vw\|_d^2 
&\displaystyle\leq \sum_{k=1}^\infty\left(\sum_{i=1}^\infty |v_i|^2\right) \left(\sum_{i=1}^\infty |w_i\frac{a_{ki}}{\deg(k)^{1/2}}|^2\right) \deg(k)\\[3ex]
&\displaystyle = \left(\sum_{i=1}^\infty |v_i|^2\right) \sum_{k=1}^\infty \sum_{i=1}^\infty |w_i|^2 a_{ki}^2 \\[3ex]
&=\|v\|^2 \sum_{i=1}^\infty |w_i|^2\deg(i)\\[1.3ex]
&=\|v\|^2\|w\|_d^2
\end{array}
\end{equation*}
for all $w\in\A_d(G)$. It follows that $\|L_v\|_d\leq \|v\|$ for all $v\in\A_d(G)$.
\end{proof}

In this case, we can consider the construction at the beginning of Section \ref{sec:Hilbert evolution algebras associated to a graph} to obtain a well defined adjacency operator $A:D(A)\longrightarrow \ell^2(\mathbb{N})$, where the domain is given by \eqref{eq:domain of A}. Recall that, by construction, $A$ is a closed operator. Now the question is when $D(A)$ is dense in $\ell^2(\mathbb{N})$, and this depends on the structure constants $a_{ki}$. If $G$ is locally finite but not of uniformly bounded degree, $D(A)$ is a proper subset of $\ell^2(\mathbb{N})$. If not, we have that $D(A)=\ell^2(\mathbb{N})$ and $A$ is closed, which by the closed graph theorem implies that $A$ will be a bounded operator, which is not possible because is not of uniformly bounded degree (see Theorem \ref{theo:adjacency oper bounded}).
Besides, it is important to study conditions on the structure constants in order to have a well defined evolution operator $C:\A_d(G)\longrightarrow\A_d(G)$, which must be of the form, $C(e_i)=e_i^2=\sum_{k=1}^{\infty} (a_{ki}/\deg(k)^{1/2}) e_k$ (recall that uniformly bounded degree condition is longer used in this case). In this framework, the operators $A$ and $C$ will no longer be unitarily conjugated, thus the analysis is more complicated.


\section{Acknowledgements}

Part of this work was carried out during a visit of S.V. to the Federal University of Pernambuco. 

\bigskip

\end{document}